\documentclass[preprint]{imsart}

\usepackage{amssymb}
\usepackage{amsmath}
\usepackage{amsthm}

\startlocaldefs
\numberwithin{equation}{section}
\theoremstyle{plain}
\newtheorem{lem}{Lemma}[section]
\newtheorem{df}{Definition}[section]
\newtheorem{tw}{Theorem}[section]

\endlocaldefs

\begin{document}

\begin{frontmatter}

\title{Almost sure asymptotic properties of central order statistics from stationary processes}

\runtitle{Almost sure asymptotic properties of central order statistics}

\begin{aug}
 \author{\fnms{Aneta} \snm{Augustynowicz}\ead[label=e1]{a.augustynowicz@mini.pw.edu.pl}}

\thankstext{t2}{
The work was supported by Warsaw University of Technology under grant no. 504/03934/1120.
}
\runauthor{A. Augustynowicz}

\affiliation{Warsaw University of Technology}

\address{
Faculty of Mathematics and Information Science\\
Warsaw University of Technology\\
ul. Koszykowa 75, 00-662 Warsaw, Poland\\
\printead{e1}
}
\end{aug}

\begin{abstract}
In this paper, we formulate and prove new properties of conditional quantiles given one of the particular sigma-fields. Next, we use them to investigate almost sure asymptotic behavior of central order statistics which arise from strictly stationary processes. Specifically we provide a~new version of a~strong ergodic theorem for central order statistics.
\end{abstract}


\begin{keyword}
\kwd{Central order statistics}
\kwd{Stationary processes}
\kwd{Ergodic processes}
\kwd{Conditional quantiles}
\kwd{Almost sure convergence}
\end{keyword}

\end{frontmatter}

\section{Introduction}
\label{sec1}

Let $(X_n,n\ge 1)$ be a~sequence of random variables (rv's) with
common cumulative distribution function (cdf) $F$ and $X_{1:n}\le\ldots\le X_{n:n}$ be the order statistics corresponding to the sample $(X_1,\ldots,X_n)$. Our purpose is to investigate asymptotic properties of $(X_{k_n:n},n\ge 1)$ in the case when
\begin{equation}\label{warK}
\textrm{for all }n,\ 1\le k_n\le n\textrm{ and }\lim_{n\to\infty} k_n/n=\lambda\in(0,1).
\end{equation}
Then $(X_{k_n},n\ge 1)$ is called a~sequence of central order statistics. If $\lim_{n\to\infty} k_n/n$ $=\lambda\in\{0,1\}$ then $X_{k_n:n},n\ge 1$ are referred to extreme or intermediate order statistics, but we will restrict our attention to the former case. Analogous results for the latter case can be found in Buraczy\'nska and Dembi\'nska (2018) and Dembi\'nska and Buraczy\'nska (2019).

Asymptotic properties of $(X_{k_n:n},n\ge 1)$, when~\eqref{warK} holds, have been under scientific considerations over the last few decades. One of the oldest results was given by Bahadur (1966) who provided an asymptotic almost sure representation for sample quantiles of independent and identically distributed (iid) rv's via the empirical distribution function. However, this result was based on specific restrictions imposed on the cdf~$F$. Bahadur's representation has been generalized by several authors, for instance by Ghosh~(1971), Kiefer~(1976) and Wu~(2005).

Another well-known result was derived by Smirnov~(1952) who proved that if~\eqref{warK} is satisfied then 
$X_{k_n:n}$ is a~strongly consistent estimator of the $\lambda$th population quantile provided that this quantile is unique and the rv's $X_n,n\ge 1$ are iid. 
Dembi\'nska~(2014) extended this result 
to the case when $(X_n,n\ge 1)$ is an arbitrary strictly stationary sequence. Specifically she gave sufficient conditions for the almost sure convergence of $X_{k_n:n}$ as $n\to\infty$ and described the distribution of the limiting rv.
For this purpose she used a~concept defined by Tomkins (1975) - the conditional quantile of an rv given a~sigma-field.
Even though her theorem concerns the asymptotic behavior of central order statistics from the sequence $\mathbb{X}=(X_{n},n\ge 1)$, its assumption and conclusion use an~auxiliary rv $Y$ from the probability triple $(\mathbb{R}^{\mathbb{N}},\mathcal{B}(\mathbb{R}^{\mathbb{N}}),\mathbb{Q})$. 
The aim of the present paper is to provide an elegant version
of this theorem, which is expressed only in terms of~$\mathbb{X}$.
Before we state and prove this version, we analyze conditional quantiles and obtain several properties of this concept with respect to one of the particular sigma-fields.

Conditional quantile, which is one of the most significant tool in our analysis, has numerous applications in statistics and finance. Its estimation allows solving such problems as the relationship between expected value and volatility (Glosten et al. 1993) or determining the CoVar systematic risk measure (Engle and Manganelli 2004). 
It was also applied to describe almost sure asymptotic behavior of proportions of near-order-statistic observations in sample from stationary processes (Dembi\'nska 2017).
Furthermore, statisticians use conditional quantiles for robust beta estimation (Chan and Lakonishok 1992) and economists to estimate various phenomenons, in particular, to measure macroecomics risk (Boucher and Maillet 2013), wage structure (Buchinsky and Leslie 2010) and economic growth (Castellano and Ho 2013). Therefore, methods for estimating conditional quantiles have been well-developed. The most popular are quantile regression (Koenker
2005, Koenker and Bassett 1978), local quantile regression (Spokoiny et al. 2013) and non-parametric estimation of conditional quantiles (Li and Racine 2007).

This paper is organized as follows. In Section~2, we recall some facts from the ergodic theory and the concept of conditional quantile. Then, in Section~3, we formulate and prove the exposition of new properties of conditional quantiles. 
In Section~4, we use these properties to obtain the main result of this paper, i.e. to derive a~refinement of the strong ergodic theorem for central order statistics given by Dembi\'nska (2014).

Finally, we introduce our notation. Unless otherwise stated, the rv's $X_n,n\ge 1$, exist in a~probability space $(\Omega,\mathcal{F},\mathbb{P})$. $\mathbb{R}$ and $\mathbb{N}$ correspond to the sets of real numbers and positive integers, respectively. We use the symbol $I(\cdot)$ to denote the indicator function, that is $I(x\in A)=1$ if $x\in A$ and $I(x\in A)=0$ otherwise. $\xrightarrow{a.s.}$ and $a.s.$ stand for almost sure convergence and almost surely. In addition, if two different measures appear, we write $\xrightarrow{\mathbb{P}-a.s.}$ and $\mathbb{E}_{\mathbb{P}}$ for almost sure convergence and expectation with respect to the measure~$\mathbb{P}$, respectively. Moreover, we say that an~event $A$ is true $\mathbb{P}-$a.s. if $\mathbb{P}(A)=1$.


\section{Preliminaries}
To state and prove the main result of this paper we
need to recall the concept of conditional quantile and collect some facts from the ergodic theory.

\begin{df}
Suppose $X$ is an~rv on a~probability space $(\Omega,\mathcal{F},\mathbb{P})$, $\mathcal{G}\subseteq\mathcal{F}$ is a~sigma-field and $\lambda\in(0,1)$. Then an~rv~$Q_{\lambda}$ with the following properties
\begin{description}
    \item[(i)]
    $Q_{\lambda}$ is $\mathcal{G}$-measurable,
    \item[(ii)]
    $\mathbb{P}(X\ge Q_{\lambda}|\mathcal{G})\ge 1-\lambda$ and $\mathbb{P}(X\le Q_{\lambda}|\mathcal{G})\ge\lambda$ a.s.
\end{description}
is called a~conditional $\lambda$th quantile of~$X$ with respect to~$\mathcal{G}$ and is denoted by $\pi_{\lambda}(X|\mathcal{G})$.
\end{df}

For every rv~$X$ and every sigma-field $\mathcal{G}$ there exists at least one conditional $\lambda$th quantile of~$X$ with respect to~$\mathcal{G}$. This statement is a~consequence of the same arguments as given in Tomkins for conditional medians (Theorem~1, 1975). Moreover,
we say that conditional $\lambda$th quantile of~$X$ with respect to~$\mathcal{G}$ is unique if given any two versions of $\pi_{\lambda}(X|\mathcal{G})$, $Q_{\lambda}$ and $Q_{\lambda}^{*}$ say, we have $Q_{\lambda}=Q_{\lambda}^{*}$ a.s. 
For more properties of conditional quantiles, we refer the reader to Tomkins (1975, 1978) and Ghosh and Mukherjee (2006).

Next, we recall some basic concepts and facts from the ergodic theory that we will use later on.
Let $(\mathbb{R}^{\mathbb{N}},\mathcal{B}(\mathbb{R}^{\mathbb{N}}),\mathbb{Q})$ denote a~probability triple, where $\mathbb{R}^{\mathbb{N}}$ is the set of sequences of real numbers $(x_1,x_2,\ldots)$,
$\mathcal{B}(\mathbb{R}^{\mathbb{N}})$ stands for the Borel sigma-field of subsets of $\mathbb{R}^{\mathbb{N}}$ and $\mathbb{Q}$ is a~stationary probability measure on the pair $(\mathbb{R}^{\mathbb{N}},\mathcal{B}(\mathbb{R}^{\mathbb{N}}))$.

\begin{df}
Let the transformation $T\colon \mathbb{R}^{\mathbb{N}}\to \mathbb{R}^{\mathbb{N}}$ be such that
\begin{equation}\label{transfT}
T((x_1,x_2,\ldots))=(x_2,x_3,\ldots).
\end{equation}
Then we call a set $B\in \mathcal{B}(\mathbb{R}^{\mathbb{N}})$
\begin{description}
 \item[(i)]
 invariant if $B=T^{-1}B$,
 \item[(ii)]
 almost invariant for $\mathbb{Q}$ if
 \[
 \mathbb{Q}((B\setminus T^{-1}B)\cup(T^{-1}B\setminus B))=0.
 \]
\end{description}
\end{df}
We write $\tilde{\mathcal{I}}$ and $\mathcal{I}_\mathbb{Q}$
for the class of all invariant events and the class of all almost invariant events for $\mathbb{Q}$, respectively.
Durrett (2010, Chapter 6) gave the following properties of $\tilde{\mathcal{I}}$ and $\mathcal{I}_\mathbb{Q}$.
\begin{lem}\label{l2}
\begin{description}
\item[(i)]
    $\tilde{\mathcal{I}}$ and $\mathcal{I}_\mathbb{Q}$ are sigma-fields.
\item[(ii)]
    An rv $X$ on $(\mathbb{R}^{\mathbb{N}},\mathcal{B}(\mathbb{R}^{\mathbb{N}}),\mathbb{Q})$ is $\tilde{\mathcal{I}}$-measurable (or $\mathcal{I}_\mathbb{Q}$-measurable) if and only if (iff)
    \begin{align*}
    X((x_1,x_2,\ldots))&=X((x_2,x_3,\ldots))\textrm{ for all }(x_1,x_2,\ldots)\in\mathbb{R}^{\mathbb{N}}\\
    \Big(\textrm{or } X((x_1,x_2,\ldots))&=X((x_2,x_3,\ldots))\textrm{ for }\mathbb{Q}\textrm{-almost every }(x_1,x_2,\ldots)\in\mathbb{R}^{\mathbb{N}}\Big).
    \end{align*}
\item[(iii)]
    If $B$ is almost invariant, then there exists an invariant set $C$ such that 
    \begin{equation}\label{BC}
    \mathbb{Q}((B\setminus C)\cup(C\setminus B))=0.
    \end{equation}
\end{description}
\end{lem}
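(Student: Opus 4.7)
For part (i), the plan is to verify the sigma-field axioms directly from the definition, using that $T^{-1}$ commutes with complementation and countable union. For $\tilde{\mathcal{I}}$ this is immediate: if $B = T^{-1} B$ then $B^c = T^{-1} B^c$, and if $B_n = T^{-1} B_n$ for every $n$ then $\bigcup_n B_n = T^{-1} \bigcup_n B_n$. For $\mathcal{I}_{\mathbb{Q}}$ I would combine the same set-theoretic identities with the containment $\bigl(\bigcup_n B_n\bigr) \triangle T^{-1}\bigl(\bigcup_n B_n\bigr) \subseteq \bigcup_n (B_n \triangle T^{-1} B_n)$ and countable subadditivity of $\mathbb{Q}$; closure under complement is trivial since $B \triangle T^{-1} B = B^c \triangle T^{-1} B^c$.

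For part (ii), the forward direction would apply the measurability hypothesis to the Borel sets $(-\infty, t]$, showing that $\{X \le t\} \in \tilde{\mathcal{I}}$ (resp.\ $\mathcal{I}_{\mathbb{Q}}$) for each $t$; letting $t$ run over the rationals and using a sandwiching argument then yields $X(x) = X(Tx)$ for all $x$ (resp.\ $\mathbb{Q}$-almost every $x$). For the converse, the pointwise identity $X = X \circ T$ gives $X^{-1}(A) = T^{-1} X^{-1}(A)$ for every Borel $A$, so every level set of $X$ lies in $\tilde{\mathcal{I}}$; the almost-everywhere version is analogous, since the exceptional null sets coming from different $A$ can be combined by countable subadditivity after reducing to a countable generating class.

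For part (iii), I would use the classical limsup construction $C := \bigcap_{m \ge 0} \bigcup_{n \ge m} T^{-n} B$. The set $C$ is invariant because re-indexing the decreasing family of tails does not change the intersection:
\[
T^{-1} C = \bigcap_{m \ge 0} \bigcup_{n \ge m} T^{-(n+1)} B = \bigcap_{m \ge 1} \bigcup_{n \ge m} T^{-n} B = C.
\]
To show $\mathbb{Q}(B \triangle C) = 0$, I would first prove by induction that $\mathbb{Q}(B \triangle T^{-n} B) = 0$ for every $n \ge 1$: the base case is the almost invariance of $B$, and the step uses
\[
B \triangle T^{-(n+1)} B \subseteq (B \triangle T^{-1} B) \cup T^{-1}(B \triangle T^{-n} B)
\]
together with the stationarity $\mathbb{Q} \circ T^{-1} = \mathbb{Q}$. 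Then $\mathbb{Q}\bigl(B \triangle \bigcup_{n \ge m} T^{-n} B\bigr) = 0$ for every $m$ by subadditivity, and passing to the intersection gives $\mathbb{Q}(B \triangle C) = 0$.

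The only real subtlety lies in part (iii): recognizing that the shift-limsup (not $B$ itself) is the correct invariant candidate, and that its exact invariance comes from the decreasing-family structure. Parts (i) and (ii) are routine manipulations of preimages and measurable level sets, so I expect no obstacle there.
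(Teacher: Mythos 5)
Your proof is correct, and it is essentially the standard argument: the paper itself gives no proof of this lemma, simply citing Durrett (2010, Chapter 6), and your constructions --- preimage manipulations for (i), level sets $\{X\le t\}$ over rational $t$ for (ii), and the invariant set $C=\bigcap_{m\ge 0}\bigcup_{n\ge m}T^{-n}B$ with the induction $\mathbb{Q}(B\bigtriangleup T^{-n}B)=0$ via stationarity for (iii) --- are exactly the ones in that reference. No gaps.
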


We will also need the concept of invariant sets with respect to a~sequence of rv's.
\begin{df}
A~set $A\in\mathcal{F}$ is called invariant with respect to the sequence $\mathbb{X}=(X_n,n\ge 1)$ defined on~the probability space $(\Omega,\mathcal{F},\mathbb{P})$ if there exists a~set $B\in\mathcal{B}(\mathbb{R}^{\mathbb{N}})$ such that
\begin{equation}\label{warAinvar}
A=\{\omega\in\Omega:\ (X_i(\omega),X_{i+1}(\omega),\ldots)\in B \}\textrm{ for any }i\ge 1.
\end{equation}
\end{df}
The collection of all such invariant sets is denoted by $\mathcal{I}^{\mathbb{X}}$. 
Lemma~\ref{InvSeq} gives two basic properties of $\mathcal{I}^{\mathbb{X}}$. 
\begin{lem}\label{InvSeq}
Let $\mathbb{X}=(X_n,n\ge 1)$ be a~strictly stationary sequence on $(\Omega,\mathcal{F},\mathbb{P})$.
\begin{description}
 \item[(i)]
    $\mathcal{I}^{\mathbb{X}}$ is a~sigma-field.
  \item[(ii)]
    A~set $A\in\mathcal{F}$ is invariant with respect to~$\mathbb{X}$ iff there exists a~set $B\in\mathcal{I}_{\mathbb{Q}}$ satisfying~\eqref{warAinvar}, where the measure $\mathbb{Q}$ on $(\mathbb{R}^{\mathbb{N}},\mathcal{B}(\mathbb{R}^{\mathbb{N}}))$ is defined as follows
\begin{equation}\label{def_q}
\mathbb{Q}(B)=\mathbb{P}(\mathbb{X}\in B)\quad\textrm{for all }B\in\mathcal{B}(\mathbb{R}^{\mathbb{N}}).
\end{equation}
\end{description}
\end{lem}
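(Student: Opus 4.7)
The plan for part (i) is a routine verification of the sigma-field axioms. Taking $B = \emptyset$ shows $\emptyset \in \mathcal{I}^{\mathbb{X}}$. If $A \in \mathcal{I}^{\mathbb{X}}$ is witnessed by $B$, then the identity $\{\omega : (X_i(\omega), X_{i+1}(\omega), \ldots) \in B^c\} = A^c$, valid for every $i \ge 1$, shows that $A^c \in \mathcal{I}^{\mathbb{X}}$ with witness $B^c$. For a sequence $(A_k)_{k \ge 1}$ in $\mathcal{I}^{\mathbb{X}}$ with respective witnesses $(B_k)_{k \ge 1}$, the Borel set $\bigcup_k B_k$ witnesses $\bigcup_k A_k$ for every $i \ge 1$, because preimages commute with unions. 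The point to highlight is merely that the same Borel set works simultaneously for every shift index $i$, which is automatic in each construction.

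For part (ii), the backward implication is immediate: since $\mathcal{I}_{\mathbb{Q}} \subseteq \mathcal{B}(\mathbb{R}^{\mathbb{N}})$, any $B \in \mathcal{I}_{\mathbb{Q}}$ satisfying \eqref{warAinvar} is a Borel witness in the sense of the preceding definition, so $A \in \mathcal{I}^{\mathbb{X}}$.

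The forward implication is the substantive part. Given $A \in \mathcal{I}^{\mathbb{X}}$ with Borel witness $B$, the goal is to verify the almost-invariance condition for $B$ under $T$. Using the definition $T^{-1}B = \{(x_1, x_2, \ldots) \in \mathbb{R}^{\mathbb{N}} : (x_2, x_3, \ldots) \in B\}$, one has the key identity
\[
\{\omega : \mathbb{X}(\omega) \in T^{-1}B\} = \{\omega : (X_2(\omega), X_3(\omega), \ldots) \in B\}.
\]
Applying \eqref{warAinvar} at $i = 1$ and $i = 2$, both this set and $\{\omega : \mathbb{X}(\omega) \in B\}$ coincide with $A$. Hence $\{\omega : \mathbb{X}(\omega) \in (B \setminus T^{-1}B) \cup (T^{-1}B \setminus B)\} = \emptyset$, and by \eqref{def_q},
\[
\mathbb{Q}\bigl((B \setminus T^{-1}B) \cup (T^{-1}B \setminus B)\bigr) = \mathbb{P}\bigl(\mathbb{X} \in (B \setminus T^{-1}B) \cup (T^{-1}B \setminus B)\bigr) = 0,
\]
so $B \in \mathcal{I}_{\mathbb{Q}}$.

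The main difficulty, such as it is, is purely bookkeeping: keeping straight which objects live on the sequence space $\mathbb{R}^{\mathbb{N}}$ versus on $\Omega$, and matching the shift $T$ on sequence space with the index-shift on $\mathbb{X}$. Once this correspondence is isolated, neither part requires any deeper measure-theoretic input, and in particular stationarity of $\mathbb{X}$ is not actually used in either direction of the lemma.
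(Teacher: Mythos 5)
Your proof is correct. Note that the paper itself gives no argument for Lemma~\ref{InvSeq}: it defers part~(i) to Shiryaev (1996) and part~(ii) to Buraczy\'nska and Dembi\'nska (2018), so there is no in-text proof to compare against; your self-contained verification fills that gap. In~(i) the checks are sound, the one point that matters being the one you flag: the witnesses $\emptyset$, $B^c$ and $\bigcup_k B_k$ built from the given witnesses serve simultaneously for every shift index $i$, since preimages commute with complements and unions. In~(ii) the backward implication is indeed immediate from $\mathcal{I}_{\mathbb{Q}}\subseteq\mathcal{B}(\mathbb{R}^{\mathbb{N}})$, and your forward implication correctly shows that the very same Borel witness $B$ is already almost invariant: comparing \eqref{warAinvar} at $i=1$ and $i=2$ gives $\{\mathbb{X}\in B\}=A=\{\mathbb{X}\in T^{-1}B\}$, hence $\{\mathbb{X}\in (B\setminus T^{-1}B)\cup(T^{-1}B\setminus B)\}=\emptyset$ and $\mathbb{Q}\bigl((B\setminus T^{-1}B)\cup(T^{-1}B\setminus B)\bigr)=0$ by \eqref{def_q}. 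One small caveat on your closing remark: although none of the computations invoke stationarity of $\mathbb{X}$, the class $\mathcal{I}_{\mathbb{Q}}$ is introduced in the paper only for a stationary measure $\mathbb{Q}$, and the stationarity of the $\mathbb{Q}$ defined by \eqref{def_q} is precisely the strict stationarity of $\mathbb{X}$; so the hypothesis is needed for the statement to make sense within the paper's framework, even if your verification never uses it.
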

For the proof of point~(i) of Lemma~\ref{InvSeq} we refer the reader to Shiryaev (1996), while point~(ii) can be found in Buraczy\'nska and Dembi\'nska (2018).

\section{New properties of conditional quantiles}
In this section we give some properties of conditional quantiles with respect to sigma-fields $\tilde{\mathcal{I}}$, $\mathcal{I}_{\mathbb{Q}}$ and~$\mathcal{I}^{\mathbb{X}}$.
\begin{tw}\label{dodatkowy2}
For any rv~$Y$ on the probability space $(\mathbb{R}^{\mathbb{N}},\mathcal{B}(\mathbb{R}^{\mathbb{N}}),\mathbb{Q})$, where $\mathbb{Q}$~is a~stationary probability measure, we have
\begin{description}
 \item[(i)]
 $\pi_{\lambda}(Y|\tilde{\mathcal{I}})$ is unique iff $\pi_{\lambda}(Y|\mathcal{I}_{\mathbb{Q}})$ is unique; 
 \item[(ii)]
 if $\pi_{\lambda}(Y|\tilde{\mathcal{I}})$ is unique (or equivalently $\pi_{\lambda}(Y|\mathcal{I}_{\mathbb{Q}})$ is unique) then
 \[
 \pi_{\lambda}(Y|\tilde{\mathcal{I}})=\pi_{\lambda}(Y|\mathcal{I}_{\mathbb{Q}})\quad\mathbb{Q}-\textrm{a.s.}
 \]
\end{description}
\end{tw}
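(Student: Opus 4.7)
The proof rests on a single observation: although $\tilde{\mathcal{I}}$ can be strictly smaller than $\mathcal{I}_{\mathbb{Q}}$, Lemma~\ref{l2}(iii) says every set in $\mathcal{I}_{\mathbb{Q}}$ coincides $\mathbb{Q}$-a.s.\ with some set in $\tilde{\mathcal{I}}$, so the two sigma-fields have the same $\mathbb{Q}$-completion. Conditional expectations with respect to them should therefore agree $\mathbb{Q}$-a.s., and the theorem should fall out of the uniqueness-almost-sure meaning of a conditional quantile.

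The first step is to upgrade Lemma~\ref{l2}(iii) from sets to random variables: for every $\mathcal{I}_{\mathbb{Q}}$-measurable rv $X$ there exists an $\tilde{\mathcal{I}}$-measurable rv $X^*$ with $X=X^*$ $\mathbb{Q}$-a.s. I would build $X^*$ by enumerating the rationals $(q_n)$, applying Lemma~\ref{l2}(iii) to each sublevel set $\{X\le q_n\}$ to obtain $C_n\in\tilde{\mathcal{I}}$ with $\mathbb{Q}(\{X\le q_n\}\triangle C_n)=0$, and setting $X^*(\omega)=\inf\{q_n:\omega\in C_n\}$ with the convention $\inf\emptyset=+\infty$. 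Countable sigma-field operations keep $X^*$ globally $\tilde{\mathcal{I}}$-measurable, and outside the single $\mathbb{Q}$-null set $N=\bigcup_n(\{X\le q_n\}\triangle C_n)$ the equivalences $\omega\in C_n\Leftrightarrow X(\omega)\le q_n$ force $X^*(\omega)=X(\omega)$.

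The second step is the analogous statement for conditional expectations: for every integrable rv $Z$,
\[
\mathbb{E}_{\mathbb{Q}}[Z|\tilde{\mathcal{I}}]=\mathbb{E}_{\mathbb{Q}}[Z|\mathcal{I}_{\mathbb{Q}}]\quad\mathbb{Q}\textrm{-a.s.}
\]
To see this, observe that $\mathbb{E}_{\mathbb{Q}}[Z|\mathcal{I}_{\mathbb{Q}}]$ is $\mathcal{I}_{\mathbb{Q}}$-measurable, so by the first step it equals some $\tilde{\mathcal{I}}$-measurable rv $W$ $\mathbb{Q}$-a.s.; and since every $\tilde{\mathcal{I}}$-set is also an $\mathcal{I}_{\mathbb{Q}}$-set, the defining integral identity of conditional expectation passes through to identify $W$ as a version of the left-hand side. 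Specializing to $Z=I(Y\ge Q)$ and $Z=I(Y\le Q)$ for any $\tilde{\mathcal{I}}$-measurable rv $Q$ then shows that the defining inequalities of a conditional $\lambda$th quantile transfer intact between the two sigma-fields.

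With these tools in hand both parts of the theorem are formal. For (ii), any version $Q$ of $\pi_{\lambda}(Y|\tilde{\mathcal{I}})$ is $\mathcal{I}_{\mathbb{Q}}$-measurable (since $\tilde{\mathcal{I}}\subseteq\mathcal{I}_{\mathbb{Q}}$) and, by the second step, also satisfies the conditional quantile inequalities with respect to~$\mathcal{I}_{\mathbb{Q}}$; hence $Q$ is a version of $\pi_{\lambda}(Y|\mathcal{I}_{\mathbb{Q}})$, and uniqueness of the latter forces $\pi_{\lambda}(Y|\tilde{\mathcal{I}})=\pi_{\lambda}(Y|\mathcal{I}_{\mathbb{Q}})$ $\mathbb{Q}$-a.s. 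Part (i) proceeds symmetrically: uniqueness with respect to~$\mathcal{I}_{\mathbb{Q}}$ transfers to $\tilde{\mathcal{I}}$ because two $\tilde{\mathcal{I}}$-versions are automatically $\mathcal{I}_{\mathbb{Q}}$-versions by the second step, and uniqueness with respect to~$\tilde{\mathcal{I}}$ transfers to $\mathcal{I}_{\mathbb{Q}}$ by replacing two $\mathcal{I}_{\mathbb{Q}}$-versions by their $\tilde{\mathcal{I}}$-measurable representatives via the first step and recognizing these as $\tilde{\mathcal{I}}$-versions via the second step. The main obstacle is the first step: once the passage from invariant sets to invariant functions is in place, everything else is an immediate consequence of the definition.
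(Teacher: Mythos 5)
Your argument is correct and follows essentially the same route as the paper: both rest on the a.s.\ coincidence of conditional expectations given $\tilde{\mathcal{I}}$ and $\mathcal{I}_{\mathbb{Q}}$ (Lemma~\ref{dodatkowy1}) and on transferring quantile versions in both directions, with the $\mathcal{I}_{\mathbb{Q}}\to\tilde{\mathcal{I}}$ direction handled by an $\tilde{\mathcal{I}}$-measurable a.s.\ modification (Lemmas~\ref{pomocniczy1} and~\ref{pomocniczy2}). The only divergence is in how that modification is built: you use the generic rational-sublevel-set construction based on Lemma~\ref{l2}(iii), whereas the paper exploits the shift-invariance characterization of Lemma~\ref{l2}(ii); both are valid.
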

In order to show Theorem~\ref{dodatkowy2}, we will need several facts, namely Lemmas~\ref{dodatkowy1}-\ref{pomocniczy2}. 

\begin{lem}\label{dodatkowy1}
For any rv $Y$ defined on the probability space $(\mathbb{R}^{\mathbb{N}},\mathcal{B}(\mathbb{R}^{\mathbb{N}}),\mathbb{Q})$ and such that $\mathbb{E}_{\mathbb{Q}}|Y|<\infty$, we have
\begin{equation}\label{teza3.2}
 \mathbb{E}_{\mathbb{Q}}(Y|\mathcal{I}_{\mathbb{Q}})
 = \mathbb{E}_{\mathbb{Q}}(Y|\tilde{\mathcal{I}})
 \quad\mathbb{Q}-\textrm{a.s.}
 \end{equation}
\end{lem}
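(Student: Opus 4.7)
The plan is to use the uniqueness property of conditional expectation, combined with the bridge between $\tilde{\mathcal{I}}$ and $\mathcal{I}_{\mathbb{Q}}$ provided by Lemma~\ref{l2}(iii). First I would observe that $\tilde{\mathcal{I}}\subseteq\mathcal{I}_{\mathbb{Q}}$: if $B=T^{-1}B$ then $(B\setminus T^{-1}B)\cup(T^{-1}B\setminus B)=\emptyset$, which trivially has $\mathbb{Q}$-measure zero. Consequently, the rv $Z:=\mathbb{E}_{\mathbb{Q}}(Y|\tilde{\mathcal{I}})$ is $\tilde{\mathcal{I}}$-measurable and therefore also $\mathcal{I}_{\mathbb{Q}}$-measurable, which takes care of the measurability half of the defining property of $\mathbb{E}_{\mathbb{Q}}(Y|\mathcal{I}_{\mathbb{Q}})$.

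Next I would verify the integration identity: for every $A\in\mathcal{I}_{\mathbb{Q}}$,
\[
\int_{A} Z\, d\mathbb{Q} = \int_{A} Y\, d\mathbb{Q}.
\]
By Lemma~\ref{l2}(iii), there exists $C\in\tilde{\mathcal{I}}$ with $\mathbb{Q}((A\setminus C)\cup(C\setminus A))=0$. Since $Z$ and $Y$ are both $\mathbb{Q}$-integrable (the latter by assumption, the former as a conditional expectation of an integrable rv), replacing $A$ by $C$ in either integral changes nothing. Then applying the defining property of $\mathbb{E}_{\mathbb{Q}}(Y|\tilde{\mathcal{I}})$ to the invariant set $C$ gives
\[
\int_{A} Z\, d\mathbb{Q} = \int_{C} Z\, d\mathbb{Q} = \int_{C} Y\, d\mathbb{Q} = \int_{A} Y\, d\mathbb{Q},
\]
as desired.

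Since $Z$ is $\mathcal{I}_{\mathbb{Q}}$-measurable and satisfies the integration identity against every $A\in\mathcal{I}_{\mathbb{Q}}$, the uniqueness (up to $\mathbb{Q}$-null sets) of conditional expectation forces $Z=\mathbb{E}_{\mathbb{Q}}(Y|\mathcal{I}_{\mathbb{Q}})$ $\mathbb{Q}$-a.s., which is exactly~\eqref{teza3.2}. There is no real obstacle here; the argument is a one-line application of Lemma~\ref{l2}(iii) inside the standard uniqueness characterization of conditional expectation, and the only thing to be mildly careful about is noting the inclusion $\tilde{\mathcal{I}}\subseteq\mathcal{I}_{\mathbb{Q}}$ so that the $\tilde{\mathcal{I}}$-measurability of $Z$ transfers to $\mathcal{I}_{\mathbb{Q}}$-measurability.
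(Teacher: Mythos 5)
Your proof is correct and is essentially identical to the paper's: both establish that $\mathbb{E}_{\mathbb{Q}}(Y|\tilde{\mathcal{I}})$ is $\mathcal{I}_{\mathbb{Q}}$-measurable via the inclusion $\tilde{\mathcal{I}}\subseteq\mathcal{I}_{\mathbb{Q}}$ and then verify the defining integral identity on each $A\in\mathcal{I}_{\mathbb{Q}}$ by swapping $A$ for an invariant set $C$ supplied by Lemma~\ref{l2}(iii). No differences worth noting.
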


\begin{proof}
To show \eqref{teza3.2}, we will prove that $\mathbb{E}_{\mathbb{Q}}(Y|\tilde{\mathcal{I}})$ 
is a~version of
$\mathbb{E}_{\mathbb{Q}}(Y|\mathcal{I}_{\mathbb{Q}})$.

Since $\tilde{\mathcal{I}}\subseteq\mathcal{I}_{\mathbb{Q}} $, $\mathbb{E}_{\mathbb{Q}}(Y|\tilde{\mathcal{I}})$ is $\mathcal{I}_{\mathbb{Q}}$-measurable. Additionaly, we must show that
\begin{equation}\label{d1}
\forall_{B\in\mathcal{I}_{\mathbb{Q}}}\quad \mathbb{E}_{\mathbb{Q}}(\mathbb{E}_{\mathbb{Q}}(Y|\tilde{\mathcal{I}})I(B))= \mathbb{E}_{\mathbb{Q}}(Y I(B)).
\end{equation}
Observe that, by the definition of $\mathbb{E}_{\mathbb{Q}}(Y|\tilde{\mathcal{I}})$, $\mathbb{E}_{\mathbb{Q}}(Y|\tilde{\mathcal{I}})$ is $\tilde{\mathcal{I}}$-measurable and
\begin{equation}\label{d2}
\forall_{C\in\tilde{\mathcal{I}}}\quad \mathbb{E}_{\mathbb{Q}}(\mathbb{E}_{\mathbb{Q}}(Y|\tilde{\mathcal{I}})I(C))= \mathbb{E}_{\mathbb{Q}}(Y I(C)).
\end{equation}
Moreover, by Lemma~\ref{l2}~(iii), for every almost invariant set $B$, there exists an invariant set~$C$ such that~\eqref{BC} is fulfilled. Therefore, 
\begin{equation}
    \forall_{B\in\mathcal{I}_{\mathbb{Q}}} \exists_{C\in\tilde{\mathcal{I}}}\quad\mathbb{E}_{\mathbb{Q}}(\mathbb{E}_{\mathbb{Q}}(Y|\tilde{\mathcal{I}})I(B))
    = \mathbb{E}_{\mathbb{Q}}(\mathbb{E}_{\mathbb{Q}}(Y|\tilde{\mathcal{I}})I(C))=
    \mathbb{E}_{\mathbb{Q}}(Y I(C))
    = \mathbb{E}_{\mathbb{Q}}(Y I(B)),    
\end{equation}
where the second equality is a~consequence of~\eqref{d2}. This establishes~\eqref{d1}. So $\mathbb{E}_{\mathbb{Q}}(Y|\tilde{\mathcal{I}})$ is a~conditional expectation of the rv~$Y$ with respect to sigma-field~$\mathcal{I}_{\mathbb{Q}}$.
\end{proof}

\begin{lem}\label{pomocniczy1}
If $Q_{\lambda}$ is a~version of $\pi_{\lambda}(Y|\tilde{\mathcal{I}})$ then $Q_{\lambda}$ is also a~version of $\pi_{\lambda}(Y|\mathcal{I}_{\mathbb{Q}})$.
\end{lem}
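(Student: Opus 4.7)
The plan is to verify that $Q_\lambda$ satisfies the two defining conditions of $\pi_\lambda(Y|\mathcal{I}_{\mathbb{Q}})$ by leveraging the inclusion $\tilde{\mathcal{I}}\subseteq\mathcal{I}_{\mathbb{Q}}$ together with Lemma~\ref{dodatkowy1}.

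First I would handle measurability. Since $Q_\lambda$ is a version of $\pi_\lambda(Y|\tilde{\mathcal{I}})$, it is $\tilde{\mathcal{I}}$-measurable, and because $\tilde{\mathcal{I}}\subseteq\mathcal{I}_{\mathbb{Q}}$ (every invariant set is trivially almost invariant), $Q_\lambda$ is $\mathcal{I}_{\mathbb{Q}}$-measurable. This takes care of condition~(i) in the definition.

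For condition~(ii), I would rewrite the conditional probabilities as conditional expectations of indicators and apply Lemma~\ref{dodatkowy1} to the bounded rv's $I(Y\ge Q_\lambda)$ and $I(Y\le Q_\lambda)$, obtaining
\[
\mathbb{Q}(Y\ge Q_\lambda\mid\mathcal{I}_{\mathbb{Q}})
=\mathbb{E}_{\mathbb{Q}}\bigl(I(Y\ge Q_\lambda)\,\big|\,\mathcal{I}_{\mathbb{Q}}\bigr)
=\mathbb{E}_{\mathbb{Q}}\bigl(I(Y\ge Q_\lambda)\,\big|\,\tilde{\mathcal{I}}\bigr)
=\mathbb{Q}(Y\ge Q_\lambda\mid\tilde{\mathcal{I}})\quad\mathbb{Q}\textrm{-a.s.,}
\]
and similarly for the event $\{Y\le Q_\lambda\}$. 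Because $Q_\lambda$ is a version of $\pi_\lambda(Y|\tilde{\mathcal{I}})$, the right-hand quantities are bounded below by $1-\lambda$ and $\lambda$ respectively, $\mathbb{Q}$-a.s., and the equalities transfer these inequalities to the conditional probabilities given $\mathcal{I}_{\mathbb{Q}}$.

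There is no real obstacle here; the main subtlety is just recognizing that Lemma~\ref{dodatkowy1}, stated for $Y$ with $\mathbb{E}_{\mathbb{Q}}|Y|<\infty$, applies trivially to the bounded indicator rv's appearing in the definition of the conditional quantile. Once that observation is made, the two defining properties (i) and (ii) both transfer from $\tilde{\mathcal{I}}$ to $\mathcal{I}_{\mathbb{Q}}$, so $Q_\lambda$ is also a version of $\pi_\lambda(Y|\mathcal{I}_{\mathbb{Q}})$.
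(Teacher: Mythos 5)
Your proof is correct and follows essentially the same route as the paper: measurability transfers via the inclusion $\tilde{\mathcal{I}}\subseteq\mathcal{I}_{\mathbb{Q}}$, and the defining inequalities transfer by applying Lemma~\ref{dodatkowy1} to the bounded indicator rv's $I(Y\ge Q_{\lambda})$ and $I(Y\le Q_{\lambda})$, which is exactly the equivalence of \eqref{d3} and \eqref{d4} recorded at the start of the paper's proof.
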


\begin{proof}
Fist observe that by Lemma~\ref{dodatkowy1}, for any rv $Q_{\lambda}$ on the probability space $(\mathbb{R}^{\mathbb{N}},\mathcal{B}(\mathbb{R}^{\mathbb{N}}),\mathbb{Q})$, we have
\begin{equation}\label{d3}
    \mathbb{Q}(Y\ge Q_{\lambda}|\mathcal{I}_{\mathbb{Q}})\ge 1-\lambda \textrm{ and }
    \mathbb{Q}(Y\le Q_{\lambda}|\mathcal{I}_{\mathbb{Q}})\ge \lambda\quad \mathbb{Q}-\textrm{a.s.}
\end{equation}
iff
\begin{equation}\label{d4}
    \mathbb{Q}(Y\ge Q_{\lambda}|\tilde{\mathcal{I}})\ge 1-\lambda \textrm{ and }
    \mathbb{Q}(Y\le Q_{\lambda}|\tilde{\mathcal{I}})\ge \lambda\quad \mathbb{Q}-\textrm{a.s.}
\end{equation}

Since $Q_{\lambda}$ is a~version of $\pi_{\lambda}(Y|\tilde{\mathcal{I}})$, we get that
\begin{enumerate}
    \item 
    $Q_{\lambda}$ is $\tilde{\mathcal{I}}$-measurable,
    \item
    $Q_{\lambda}$ satisfies~\eqref{d4}.
\end{enumerate}

As $\tilde{\mathcal{I}}\subseteq\mathcal{I}_{\mathbb{Q}}$, we conclude that $Q_{\lambda}$ is $\mathcal{I}_{\mathbb{Q}}$-measurable. Moreover, \eqref{d4} implies~\eqref{d3} in view of the previous observation. This completes the proof.
\end{proof}

\begin{lem}\label{pomocniczy2}
If $Q_{\lambda}$ is a~version of $\pi_{\lambda}(Y|\mathcal{I}_{\mathbb{Q}})$ then there exists an~rv $R_{\lambda}$ such that $R_{\lambda}$ is a~version of $\pi_{\lambda}(Y|\tilde{\mathcal{I}})$ and $Q_{\lambda}=R_{\lambda}$ $\mathbb{Q}$-a.s.
\end{lem}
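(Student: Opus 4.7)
The plan is to construct a $\tilde{\mathcal{I}}$-measurable random variable $R_{\lambda}$ that agrees with $Q_{\lambda}$ modulo $\mathbb{Q}$-null sets, and then to transfer the defining inequalities of $\pi_{\lambda}(Y|\mathcal{I}_{\mathbb{Q}})$ to $R_{\lambda}$ using Lemma~\ref{dodatkowy1}. The first part is the real work; the second part is essentially automatic.

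For the construction, I would exploit Lemma~\ref{l2}(iii) at the level of the sublevel sets of $Q_{\lambda}$. For each $q\in\mathbb{Q}$ set $B_q=\{Q_\lambda\le q\}$; this lies in $\mathcal{I}_{\mathbb{Q}}$ because $Q_\lambda$ is $\mathcal{I}_{\mathbb{Q}}$-measurable, so Lemma~\ref{l2}(iii) yields invariant sets $\widetilde{C}_q\in\tilde{\mathcal{I}}$ with $\mathbb{Q}(B_q\triangle \widetilde{C}_q)=0$. In order to secure monotonicity I would replace $\widetilde{C}_q$ by
\[
C_q\;:=\;\bigcup_{q'\in\mathbb{Q},\,q'\le q}\widetilde{C}_{q'},
\]
which is still invariant (a countable union of invariant sets is invariant, by Lemma~\ref{l2}(i)) and still satisfies $\mathbb{Q}(B_q\triangle C_q)=0$ (the symmetric difference is contained in $\bigcup_{q'\le q}(\widetilde{C}_{q'}\triangle B_{q'})$, because $B_{q'}\subseteq B_q$ when $q'\le q$), while now $C_{q_1}\subseteq C_{q_2}$ whenever $q_1\le q_2$. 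Then I would define
\[
R_{\lambda}(x)\;:=\;\inf\{q\in\mathbb{Q}:x\in C_q\},
\]
with the convention $\inf\emptyset=+\infty$. For any $r\in\mathbb{R}$ one has $\{R_\lambda\le r\}=\bigcap_{n\ge 1}\bigcup_{q\in\mathbb{Q},\,q<r+1/n}C_q$, which is invariant, so $R_\lambda$ is $\tilde{\mathcal{I}}$-measurable. Off the $\mathbb{Q}$-null set $N=\bigcup_{q\in\mathbb{Q}}(B_q\triangle C_q)$ one has the identity $\{q\in\mathbb{Q}:x\in C_q\}=\{q\in\mathbb{Q}:Q_\lambda(x)\le q\}$, hence $R_\lambda(x)=Q_\lambda(x)$ outside $N$, i.e.\ $R_\lambda=Q_\lambda$ $\mathbb{Q}$-a.s.

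To verify the quantile inequalities, note that $R_\lambda=Q_\lambda$ $\mathbb{Q}$-a.s.\ gives $I(Y\ge R_\lambda)=I(Y\ge Q_\lambda)$ $\mathbb{Q}$-a.s., so
\[
\mathbb{Q}(Y\ge R_\lambda|\tilde{\mathcal{I}})=\mathbb{Q}(Y\ge Q_\lambda|\tilde{\mathcal{I}})\quad \mathbb{Q}-\textrm{a.s.}
\]
Applying Lemma~\ref{dodatkowy1} to the bounded (hence integrable) rv $I(Y\ge Q_\lambda)$ identifies this with $\mathbb{Q}(Y\ge Q_\lambda|\mathcal{I}_{\mathbb{Q}})\ge 1-\lambda$, which holds $\mathbb{Q}$-a.s.\ because $Q_\lambda$ is a version of $\pi_\lambda(Y|\mathcal{I}_{\mathbb{Q}})$. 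The inequality $\mathbb{Q}(Y\le R_\lambda|\tilde{\mathcal{I}})\ge \lambda$ follows by the same argument applied to $I(Y\le Q_\lambda)$. Together with $\tilde{\mathcal{I}}$-measurability, this shows that $R_\lambda$ is a version of $\pi_\lambda(Y|\tilde{\mathcal{I}})$, completing the proof.

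The main obstacle I anticipate is the construction step: passing from an $\mathcal{I}_{\mathbb{Q}}$-measurable $Q_\lambda$ (shift-invariant only on a $\mathbb{Q}$-full set) to a genuinely $\tilde{\mathcal{I}}$-measurable modification, while retaining a.s.\ equality. The countable level-set / monotonization argument above is, I think, the cleanest way, because it reduces the upgrade from $\mathcal{I}_{\mathbb{Q}}$ to $\tilde{\mathcal{I}}$ to countably many applications of Lemma~\ref{l2}(iii). Once $R_\lambda$ is in hand, Lemma~\ref{dodatkowy1} makes the second half of the proof routine.
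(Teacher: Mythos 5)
Your proof is correct, but the construction of the invariant modification $R_\lambda$ takes a genuinely different route from the paper's. The paper exploits Lemma~\ref{l2}(ii) directly: since $Q_\lambda$ is $\mathcal{I}_{\mathbb{Q}}$-measurable, $Q_\lambda(x)=Q_\lambda(Tx)$ for $\mathbb{Q}$-a.e.\ $x$ (where $T$ is the shift), and it sets $R_\lambda(x)=Q_\lambda(T^{i_0-1}x)$ whenever the values $Q_\lambda(T^{i-1}x)$ stabilize for $i\ge i_0$, and $0$ otherwise; this is exactly shift-invariant by inspection, hence $\tilde{\mathcal{I}}$-measurable by Lemma~\ref{l2}(ii), and it agrees with $Q_\lambda$ on the full-measure set where $Q_\lambda(x)=Q_\lambda(T^{i-1}x)$ for all $i$. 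You instead approximate the rational sublevel sets of $Q_\lambda$ by genuinely invariant sets via Lemma~\ref{l2}(iii), monotonize, and take an infimum --- the standard ``completion'' argument. Both are valid; the paper's trick is shorter and avoids the monotonization step, whereas yours is more generic: it would work for any pair of sigma-fields related as in Lemma~\ref{l2}(iii), with no shift map in sight. Two small points to tidy up in your version: first, $R_\lambda(x)=\inf\{q: x\in C_q\}$ can a priori take the values $\pm\infty$ (on the invariant sets $\bigcap_q C_q^{c}$ and $\bigcap_q C_q$, respectively), so to get a genuine real-valued rv you should redefine it as $0$ there; these sets are invariant and $\mathbb{Q}$-null, so nothing is lost. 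Second, you use the symbol $\mathbb{Q}$ both for the set of rationals indexing $q$ and for the measure, which clashes with the paper's notation and should be changed. The second half of your argument --- transferring the two conditional-probability inequalities through Lemma~\ref{dodatkowy1} applied to the bounded indicators --- is exactly what the paper does: it is the equivalence of \eqref{d3} and \eqref{d4} recorded in the proof of Lemma~\ref{pomocniczy1}.
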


\begin{proof}
Let $Q_{\lambda}$ be a~version of $\pi_{\lambda}(Y|\mathcal{I}_{\mathbb{Q}})$. Then~\eqref{d3} holds so in consequence~\eqref{d4} is also fulfilled.
Since $Q_{\lambda}$ is $\mathcal{I}_{\mathbb{Q}}$-measurable, Condition (ii) of Lemma~\ref{l2} gives, for any $i\ge 1$,
\begin{equation}\label{d5} Q_{\lambda}((x_1,x_2,\ldots))=Q_{\lambda}((x_i,x_{i+1},\ldots))\textrm{ for }\mathbb{Q}-\textrm{almost every }(x_1,x_2,\ldots)\in\mathbb{R}^{\mathbb{N}}.
\end{equation}

Let us define
\[
R_{\lambda}((x_1,x_2,\ldots)) = Q_{\lambda}((x_{i_0},x_{i_0+1},\ldots))
\]
if there exists $i_0$ such that $Q_{\lambda}((x_i,x_{i+1},\ldots))=Q_{\lambda}((x_{i+1},x_{i+2},\ldots))$ for all $i\ge i_0$ and $R_{\lambda}((x_1,x_2,\ldots))=0$ otherwise. Then
\[
R_{\lambda}((x_1,x_2,\ldots))=R_{\lambda}((x_2,x_{3},\ldots))\textrm{ for every }(x_1,x_2,\ldots)\in\mathbb{R}^{\mathbb{N}}
\]
and by Lemma~\ref{l2} (ii),
\begin{equation}\label{d6}
    R_{\lambda}\textrm{ is }\tilde{\mathcal{I}}-\textrm{measurable}.
\end{equation}
Next observe that $R_{\lambda}=Q_{\lambda}$ $\mathbb{Q}-$a.s., because
\begin{align*}
\{(x_1,x_2,\ldots)\in\mathbb{R}^{\mathbb{N}}\colon R_{\lambda}((x_1,x_2,\ldots))=Q_{\lambda}((x_1,x_2,\ldots))\}\\
\supseteq
\{(x_1,x_2,\ldots)\in\mathbb{R}^{\mathbb{N}}\colon Q_{\lambda}((x_1,x_2,\ldots))=Q_{\lambda}((x_i,x_{i+1},\ldots))\textrm{ for all }i\ge 1\}
\end{align*}
and
\begin{align*}
    \mathbb{Q}(
    \{(x_1,x_2,\ldots)\in\mathbb{R}^{\mathbb{N}}\colon Q_{\lambda}((x_1,x_2,\ldots))=Q_{\lambda}((x_i,x_{i+1},\ldots))\textrm{ for all }i\ge 1\})=1
\end{align*}
by~\eqref{d5}.

Since $R_{\lambda}=Q_{\lambda}$ $\mathbb{Q}-$a.s., \eqref{d4} implies
\begin{equation}\label{d7}
    \mathbb{Q}(Y\ge R_{\lambda}|\tilde{\mathcal{I}})\ge 1-\lambda\textrm{ and }
    \mathbb{Q}(Y\le R_{\lambda}|\tilde{\mathcal{I}})\ge \lambda\quad \mathbb{Q}-\textrm{a.s.}
\end{equation}
By~\eqref{d6} and~\eqref{d7}, $R_{\lambda}$ is a~version of $\pi_{\lambda}(Y|\tilde{\mathcal{I}})$. This completes the proof.
\end{proof}

Now, using Lemmas~\ref{dodatkowy1}-\ref{pomocniczy2},
we can prove Theorem~\ref{dodatkowy2}.
\begin{proof}[Proof of Theorem~\ref{dodatkowy2}]
Let us assume first that $\pi_{\lambda}(Y|\tilde{\mathcal{I}})$ is not unique. This means that there exist two versions of $\pi_{\lambda}(Y|\tilde{\mathcal{I}})$, $Q_{\lambda}$ and $Q_{\lambda}^{*}$ say, such that
\[
\mathbb{Q}(\{(x_1,x_2,\ldots)\in\mathbb{R}^{\mathbb{N}}\colon Q_{\lambda}((x_1,x_2,\ldots))\neq Q_{\lambda}^{*}((x_1,x_2,\ldots))\})>0.
\]
By Lemma~\ref{pomocniczy1}, any version of~$\pi_{\lambda}(Y|\tilde{\mathcal{I}})$ is a~version of $\pi_{\lambda}(Y|\mathcal{I}_{\mathbb{Q}})$, so we get that there exist two versions of $\pi_{\lambda}(Y|\mathcal{I}_{\mathbb{Q}})$, $Q_{\lambda}$ and $Q_{\lambda}^{*}$, which are not $\mathbb{Q}-$a.s. equal. Thus, $\pi_{\lambda}(Y|\mathcal{I}_{\mathbb{Q}})$ is not unique.
Therefore, we have proved that
\begin{equation}\label{d7A}
    \pi_{\lambda}(Y|\tilde{\mathcal{I}})\textrm{ is not unique } \Rightarrow \pi_{\lambda}(Y|\mathcal{I}_{\mathbb{Q}})\textrm{ is not unique.}
\end{equation}

Now assume that $\pi_{\lambda}(Y|\mathcal{I}_{\mathbb{Q}})$ is not unique. Then there exist two versions of $\pi_{\lambda}(Y|\mathcal{I}_{\mathbb{Q}})$, $\hat{Q}_{\lambda}$ and $\hat{Q}_{\lambda}^{*}$ say, such that
\begin{equation}\label{d8}
\mathbb{Q}(\{(x_1,x_2,\ldots)\in\mathbb{R}^{\mathbb{N}}\colon \hat{Q}_{\lambda}((x_1,x_2,\ldots))\neq \hat{Q}_{\lambda}^{*}((x_1,x_2,\ldots))\})>0.
\end{equation}
By Lemma~\ref{pomocniczy2}, there exist two versions of $\pi_{\lambda}(Y|\tilde{\mathcal{I}})$, $R_{\lambda}$ and $R_{\lambda}^{*}$ say, such that
\[
R_{\lambda}=\hat{Q}_{\lambda}\textrm{ and }R_{\lambda}^{*}=\hat{Q}_{\lambda}^{*}\quad\mathbb{Q-}\textrm{a.s.}
\]
In consequence of \eqref{d8}, we have
\[
\mathbb{Q}(\{(x_1,x_2,\ldots)\in\mathbb{R}^{\mathbb{N}}\colon R_{\lambda}((x_1,x_2,\ldots))\neq R_{\lambda}^{*}((x_1,x_2,\ldots))\})>0.
\]
Thus we have showed that $\pi_{\lambda}(Y|\tilde{\mathcal{I}})$ is not unique and consequently that
\begin{equation}\label{d8A}
    \pi_{\lambda}(Y|\mathcal{I}_{\mathbb{Q}})\textrm{ is not unique}\Rightarrow
    \pi_{\lambda}(Y|\tilde{\mathcal{I}})\textrm{ is not unique.}
\end{equation}
But~\eqref{d7A} combined with~\eqref{d8A} is equivalent to part (i) of Theorem~\ref{dodatkowy2}.

Next, we move to part (ii) of this theorem. Let $Q_{\lambda}$ be a~version of~$\pi_{\lambda}(Y|\tilde{\mathcal{I}})$. Then, by Lemma~\ref{pomocniczy1}, $Q_{\lambda}$ is also a~version of~$\pi_{\lambda}(Y|\mathcal{I}_{\mathbb{Q}})$. Since all versions of $\pi_{\lambda}(Y|\tilde{\mathcal{I}})$ are $\mathbb{Q}-$a.s. equal and all versions of~$\pi_{\lambda}(Y|\mathcal{I}_{\mathbb{Q}})$ are $\mathbb{Q}-$a.s. equal, we get
\[
\pi_{\lambda}(Y|\tilde{\mathcal{I}})=Q_{\lambda}=\pi_{\lambda}(Y|\mathcal{I}_{\mathbb{Q}})\quad\mathbb{Q-}a.s.
\]
This is precisely the assertion of part~(ii).
\end{proof}

The second crucial result of this section is Theorem~\ref{fakt3}.
We begin by formulating Lemmas~\ref{fakt}-\ref{fakt2}. They will be the basis to prove Theorem~\ref{fakt3}.
\begin{lem}\label{fakt}
Let $Q$ be an rv on a~probability space $(\mathbb{R}^{\mathbb{N}},\mathcal{\mathbb{R}^{\mathbb{N}}},\mathbb{Q})$, where $\mathbb{Q}$ is a~stationary probability measure, and $(X_n,n\ge 1)$ be a~strictly stationary sequence on some probability space $(\Omega,\mathcal{F},\mathbb{P})$. If the rv $Q$ is $\tilde{\mathcal{I}}$-measurable, then the rv $Q((X_1,X_2,\ldots))$ is $\mathcal{I}^{\mathbb{X}}$-measurable.
\end{lem}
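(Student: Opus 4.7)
The plan is to verify directly from the definitions that for every Borel set $D\subseteq\mathbb{R}$, the preimage $\{\omega\in\Omega\colon Q((X_1(\omega),X_2(\omega),\ldots))\in D\}$ belongs to $\mathcal{I}^{\mathbb{X}}$. Since sets of this form generate the sigma-field making $Q((X_1,X_2,\ldots))$ measurable, this suffices.

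First, I would fix a Borel set $D\subseteq\mathbb{R}$ and let $B=\{(x_1,x_2,\ldots)\in\mathbb{R}^{\mathbb{N}}\colon Q((x_1,x_2,\ldots))\in D\}$. Since $Q$ is a (Borel) rv on $(\mathbb{R}^{\mathbb{N}},\mathcal{B}(\mathbb{R}^{\mathbb{N}}),\mathbb{Q})$, we have $B\in\mathcal{B}(\mathbb{R}^{\mathbb{N}})$. The goal becomes to show that $B$ satisfies the defining identity~\eqref{warAinvar}; that is, the set
\[
\{\omega\in\Omega\colon (X_i(\omega),X_{i+1}(\omega),\ldots)\in B\}
\]
does not depend on $i\ge 1$.

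The key ingredient is the $\tilde{\mathcal{I}}$-measurability of $Q$. By Lemma~\ref{l2}(ii), it gives $Q((x_1,x_2,\ldots))=Q((x_2,x_3,\ldots))$ pointwise on $\mathbb{R}^{\mathbb{N}}$, which iterates to $Q((x_1,x_2,\ldots))=Q((x_i,x_{i+1},\ldots))$ for every $i\ge 1$ and every $(x_1,x_2,\ldots)\in\mathbb{R}^{\mathbb{N}}$. Applying this with $x_j=X_j(\omega)$, for each $\omega\in\Omega$ and each $i\ge 1$ we obtain
\[
Q((X_i(\omega),X_{i+1}(\omega),\ldots))=Q((X_1(\omega),X_2(\omega),\ldots)).
\]
Consequently, $(X_i(\omega),X_{i+1}(\omega),\ldots)\in B$ if and only if $(X_1(\omega),X_2(\omega),\ldots)\in B$, so the set in question coincides with $\{\omega\colon Q((X_1(\omega),X_2(\omega),\ldots))\in D\}$ for every $i\ge 1$. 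This verifies~\eqref{warAinvar} and yields that the preimage lies in $\mathcal{I}^{\mathbb{X}}$, completing the argument.

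I do not expect a serious obstacle: the argument is essentially bookkeeping using pointwise (rather than $\mathbb{Q}$-a.s.) shift invariance, which is exactly what the $\tilde{\mathcal{I}}$-measurability assumption hands us via Lemma~\ref{l2}(ii). The only point that needs mild care is that here we require the identity $Q((x_1,x_2,\ldots))=Q((x_i,x_{i+1},\ldots))$ to hold for \emph{all} sequences, not merely $\mathbb{Q}$-almost all of them, because the set $\{\omega\colon (X_i(\omega),X_{i+1}(\omega),\ldots)\in B\}$ lives on $\Omega$ under $\mathbb{P}$ and not under $\mathbb{Q}$; this is precisely why the statement is formulated in terms of $\tilde{\mathcal{I}}$ rather than $\mathcal{I}_{\mathbb{Q}}$.
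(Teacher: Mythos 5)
Your proposal is correct and follows essentially the same route as the paper: both arguments fix a Borel set, form the preimage $B$ under $Q$, and use the $\tilde{\mathcal{I}}$-measurability of $Q$ to conclude that $\{\omega\colon (X_i(\omega),X_{i+1}(\omega),\ldots)\in B\}$ is independent of $i$, which is exactly condition~\eqref{warAinvar}. The only cosmetic difference is that the paper phrases the invariance at the level of the set ($B\in\tilde{\mathcal{I}}$, i.e.\ $B=T^{-1}B$) while you phrase it at the level of the function via Lemma~\ref{l2}(ii) ($Q\circ T=Q$ pointwise); these are equivalent, and your closing remark correctly identifies why the everywhere (rather than $\mathbb{Q}$-a.e.) invariance is the essential hypothesis.
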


\begin{proof}
Our aim is to show that
\[
\{\omega\colon Q((X_1(\omega),X_2(\omega),\ldots))\in A\}\in \mathcal{I}^{\mathbb{X}}\quad\textrm{for all }A\in\mathcal{B}(\mathbb{R}).
\]
By the definition of $\mathcal{I}^{\mathbb{X}}$, this means the same as
\begin{align*}
    &\textrm{for all }A\in\mathcal{B}(\mathbb{R})\textrm{ there exists }B\in\mathcal{B}(\mathbb{R}^{\mathbb{N}})\textrm{ such that}\\
    \{\omega\colon Q((X_1(\omega),& X_2(\omega),\ldots))\in A\}=
    \{\omega\colon (X_i(\omega),X_{i+1}(\omega),\ldots)\in B\}\textrm{ for any }i\ge 1.
\end{align*}

Using the assumption that $Q$ is $\tilde{\mathcal{I}}$- measurable yields
\[
\{(x_1,x_2,\ldots)\in\mathcal{B}(\mathbb{R}^{\mathbb{N}})\colon Q((x_1,x_2,\ldots))\in A\}\in \tilde{\mathcal{I}}\quad\textrm{for all }A\in\mathcal{B}(\mathbb{R}),
\]
but this ensures that
\begin{align}\label{d10}
    \textrm{for all }A\in\mathcal{B}(\mathbb{R})\textrm{ there exists }B\in\tilde{\mathcal{I}}\subset\mathcal{B}(\mathbb{R}^{\mathbb{N}})\textrm{ such that}\nonumber\\
    \{(x_1,x_2,\ldots)\in\mathcal{B}(\mathbb{R}^{\mathbb{N}})\colon Q((x_1,x_2,\ldots))\in A\}=B.
\end{align}
Therefore, for all $A\in\mathcal{B}(\mathbb{R})$ there exists $B\in\tilde{\mathcal{I}}\subset\mathcal{B}(\mathbb{R}^{\mathbb{N}})$ such that
\begin{align*}
    \{\omega\colon Q((X_1(\omega),X_2(\omega),\ldots))\in A\}=
    \{\omega\colon (X_1(\omega),X_2(\omega),\ldots)\in B\}\\
    =\{\omega\colon (X_i(\omega),X_{i+1}(\omega),\ldots)\in B\}\textrm{ for any }i\ge 1,
\end{align*}
and the proof is complete.
\end{proof}

\begin{lem}\label{fakt2}
Let $\mathbb{X}=(X_n,n\ge 1)$ be a~strictly stationary sequence on a~probability space $(\Omega,\mathcal{F},\mathbb{P})$ and $\mathbb{Q}$ be the probability measure on $(\mathbb{R}^{\mathbb{N}},\mathcal{B}(\mathbb{R}^{\mathbb{N}}))$ defined by~\eqref{def_q}.
Then, for any set $D\in\mathcal{B}(\mathbb{R}^{\mathbb{N}})$,
\[
\mathbb{E}_{\mathbb{P}}(I(D)((X_1,X_2,\ldots))|\mathcal{I}^{\mathbb{X}})=
\mathbb{E}_{\mathbb{Q}}(I(D)|\tilde{\mathcal{I}})((X_1,X_2,\ldots))\quad\mathbb{P}-\textrm{a.s.}
\]
\end{lem}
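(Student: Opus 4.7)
The plan is to verify that the random variable $W := \mathbb{E}_{\mathbb{Q}}(I(D)|\tilde{\mathcal{I}})((X_1,X_2,\ldots))$ satisfies the two defining properties of the conditional expectation $\mathbb{E}_{\mathbb{P}}(I(D)((X_1,X_2,\ldots))|\mathcal{I}^{\mathbb{X}})$, namely $\mathcal{I}^{\mathbb{X}}$-measurability and the correct averaging property over sets in $\mathcal{I}^{\mathbb{X}}$.

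For measurability, let $Q := \mathbb{E}_{\mathbb{Q}}(I(D)|\tilde{\mathcal{I}})$. By the definition of conditional expectation, $Q$ is $\tilde{\mathcal{I}}$-measurable on $(\mathbb{R}^{\mathbb{N}},\mathcal{B}(\mathbb{R}^{\mathbb{N}}),\mathbb{Q})$. Lemma~\ref{fakt} then immediately yields that $Q((X_1,X_2,\ldots))=W$ is $\mathcal{I}^{\mathbb{X}}$-measurable, which is exactly the first required property.

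For the averaging property, I would fix an arbitrary $A\in\mathcal{I}^{\mathbb{X}}$ and aim to show
\[
\mathbb{E}_{\mathbb{P}}\bigl(I(A)\,W\bigr) = \mathbb{E}_{\mathbb{P}}\bigl(I(A)\,I(D)((X_1,X_2,\ldots))\bigr).
\]
By Lemma~\ref{InvSeq}(ii) there exists $B\in\mathcal{I}_{\mathbb{Q}}$ such that $A=\{\omega : (X_1(\omega),X_2(\omega),\ldots)\in B\}$; hence $I(A)=I(B)((X_1,X_2,\ldots))$. Since $\mathbb{Q}$ is the distribution of $\mathbb{X}$ under $\mathbb{P}$ by definition~\eqref{def_q}, both sides of the desired identity translate into integrals against $\mathbb{Q}$:
\[
\mathbb{E}_{\mathbb{P}}\bigl(I(A)\,W\bigr) = \mathbb{E}_{\mathbb{Q}}\bigl(I(B)\,Q\bigr),\qquad
\mathbb{E}_{\mathbb{P}}\bigl(I(A)\,I(D)((X_1,X_2,\ldots))\bigr) = \mathbb{E}_{\mathbb{Q}}\bigl(I(B)\,I(D)\bigr).
\]
Now I would invoke Lemma~\ref{l2}(iii) to pick an invariant set $C\in\tilde{\mathcal{I}}$ with $\mathbb{Q}(B\triangle C)=0$. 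Replacing $I(B)$ by $I(C)$ is then cost-free on both sides, after which the defining property of $Q=\mathbb{E}_{\mathbb{Q}}(I(D)|\tilde{\mathcal{I}})$ applied to $C\in\tilde{\mathcal{I}}$ gives $\mathbb{E}_{\mathbb{Q}}(I(C)\,Q)=\mathbb{E}_{\mathbb{Q}}(I(C)\,I(D))$, closing the chain.

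The only delicate point is the passage between the two sigma-fields $\mathcal{I}^{\mathbb{X}}$ on $\Omega$ and $\tilde{\mathcal{I}},\mathcal{I}_{\mathbb{Q}}$ on $\mathbb{R}^{\mathbb{N}}$; this is where the combination of Lemma~\ref{InvSeq}(ii) (providing an almost invariant $B$) with Lemma~\ref{l2}(iii) (upgrading $B$ to a genuinely invariant $C$) is indispensable, because the defining identity of $Q$ is guaranteed only against sets in $\tilde{\mathcal{I}}$, not in $\mathcal{I}_{\mathbb{Q}}$. Apart from that bridging step everything reduces to a routine pushforward argument, and no extra integrability hypothesis is needed since $I(D)$ is bounded.
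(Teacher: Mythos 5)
Your proof is correct and follows essentially the same route as the paper's: measurability via Lemma~\ref{fakt}, then the averaging identity by pushing forward to $\mathbb{Q}$ via~\eqref{def_q} and upgrading the almost invariant set to a genuinely invariant one through Lemma~\ref{l2}(iii) before applying the defining property of $\mathbb{E}_{\mathbb{Q}}(I(D)|\tilde{\mathcal{I}})$. If anything, your explicit appeal to Lemma~\ref{InvSeq}(ii) to guarantee $B\in\mathcal{I}_{\mathbb{Q}}$ before invoking Lemma~\ref{l2}(iii) makes that step slightly more transparent than in the paper.
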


\begin{proof}
Our aim is to show that $\mathbb{E}_{\mathbb{Q}}(I(D)|\tilde{\mathcal{I}})((X_1,X_2,\ldots))$ is a~version of the conditional expectation of the rv $I(D)((X_1,X_2,\ldots))$ with respect to the sigma-field $\mathcal{I}^{\mathbb{X}}$.

First note that $\mathbb{E}_{\mathbb{Q}}(I(D)|\tilde{\mathcal{I}})$ is $\tilde{\mathcal{I}}$-measurable, so
$\mathbb{E}_{\mathbb{Q}}(I(D)|\tilde{\mathcal{I}})((X_1,X_2,\ldots))$ is $\mathcal{I}^{\mathbb{X}}$-measurable in view of Lemma~\ref{fakt}.
Thus, we are reduced to proving that
\begin{equation}\label{a1}
    \mathbb{E}_{\mathbb{P}}\Big(I(D) ((X_1,X_2,\ldots)) I(A)\Big)
    = \mathbb{E}_{\mathbb{P}}\Big(\mathbb{E}_{\mathbb{Q}}(I(D)|\tilde{\mathcal{I}}) ((X_1,X_2,\ldots)) I(A)\Big)
    \textrm{ for all }A\in\mathcal{I}^{\mathbb{X}}.
\end{equation}

Fix $A\in\mathcal{I}^{\mathbb{X}}$ and observe that, for all $i\ge 1$,
\begin{align*}
    \mathbb{E}_{\mathbb{P}}(I(D) ((X_1,X_2,\ldots)) I(A))
    = \mathbb{E}_{\mathbb{P}}(I((X_1,X_2,\ldots)\in D) I(A))\\
    = \mathbb{P}(\{(X_1,X_2,\ldots)\in D\}\cap \{(X_i,X_{i+1},\ldots)\in B\}),
\end{align*}    
where the set $B\in\mathcal{B}(\mathbb{R}^{\mathbb{N}})$ is such that \eqref{warAinvar} holds. In particular,
\begin{align*}
    \mathbb{E}_{\mathbb{P}}(I(D)((X_1,X_2,\ldots))I(A))=\mathbb{P}((X_1,X_2,\ldots)\in D\cap B)
    =\mathbb{Q}(D\cap B),
\end{align*}
by \eqref{def_q}. From part~(iii) of Lemma~\ref{l2}, there exists an~invariant set $E$ such that
\begin{equation}\label{BE}
\mathbb{Q}((B\setminus E)\cup(E\setminus B))=0,
\end{equation}
so in consequence we have
\begin{equation}\label{a2}
\mathbb{E}_{\mathbb{P}}(I(D)((X_1,X_2,\ldots)) I(A)) =
\mathbb{Q}(D\cap B) = \mathbb{Q}(D\cap E).
\end{equation}

On the other hand, for any $i\ge 1$,
\begin{align*}
    \mathbb{E}_{\mathbb{P}}&\Big(\mathbb{E}_{\mathbb{Q}}(I(D)|\tilde{\mathcal{I}}) ((X_1,X_2,\ldots)) I(A)\Big)\nonumber\\
    &= \mathbb{E}_{\mathbb{P}}\Big(\mathbb{E}_{\mathbb{Q}}(I(D)|\tilde{\mathcal{I}}) ((X_1,X_2,\ldots))
    I((X_i,X_{i+1},\ldots)\in B)\Big)\nonumber\\ 
    &= \mathbb{E}_{\mathbb{P}}\Big(\mathbb{E}_{\mathbb{Q}}(I(D)|\tilde{\mathcal{I}}) ((X_1,X_2,\ldots))
    I(B)((X_1,X_{2},\ldots))\Big)\nonumber\\
    &= \mathbb{E}_{\mathbb{P}}\Big((\mathbb{E}_{\mathbb{Q}}(I(D)|\tilde{\mathcal{I}})I(B)) ((X_1,X_2,\ldots))\Big)
    = \mathbb{E}_{\mathbb{Q}}(\mathbb{E}_{\mathbb{Q}}(I(D)|\tilde{\mathcal{I}})I(B)),
\end{align*}
where the last equality is a~consequence of \eqref{def_q}.
Using \eqref{BE} yields
\begin{align}\label{a3}    
    \mathbb{E}_{\mathbb{Q}}(\mathbb{E}_{\mathbb{Q}}(I(D)|\tilde{\mathcal{I}})
    I(B))= \mathbb{E}_{\mathbb{Q}}(\mathbb{E}_{\mathbb{Q}}(I(D)|\tilde{\mathcal{I}})
    I(E))\nonumber\\
    = \mathbb{E}_{\mathbb{Q}}(\mathbb{E}_{\mathbb{Q}}(I(D) \cap I(E)|\tilde{\mathcal{I}}))
    =\mathbb{E}_{\mathbb{Q}}(I(D)\cap I(E))
    =\mathbb{Q}(D\cap E).
\end{align}
Combining \eqref{a2} with~\eqref{a3} we obtain \eqref{a1} and the proof is complete.
\end{proof}

\begin{tw}\label{fakt3}
Let $\mathbb{X}=(X_n,n\ge 1)$ be a~strictly stationary sequence of rv's on any probability space $(\Omega,\mathcal{F},\mathbb{P})$ and let $\mathbb{Q}$ be a~stationary measure on $(\mathbb{R}^{\mathbb{N}},\mathcal{B}(\mathbb{R}^{\mathbb{N}}))$ defined by~\eqref{def_q}.
On $(\mathbb{R}^{\mathbb{N}},\mathcal{B}(\mathbb{R}^{\mathbb{N}}),\mathbb{Q})$ we define the rv $Y\colon\mathbb{R}^{\mathbb{N}}\to\mathbb{R}$,
\begin{equation}\label{def_Y1}
Y((x_1,x_2,\ldots))=x_1\quad\textrm{for }(x_1,x_2,\ldots)\in\mathbb{R}^{\mathbb{N}}.
\end{equation}
If the conditional $\lambda$th quantile $\pi_{\lambda}(X_1|\mathcal{I}^{\mathbb{X}})$ of~$X_1$ given~$\mathcal{I}^{\mathbb{X}}$ is unique then the conditional $\lambda$th quantile $\pi_{\lambda}(Y|\tilde{\mathcal{I})}$ of~$Y$ given~$\tilde{\mathcal{I}}$ is also unique.
\end{tw}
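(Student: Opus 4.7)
The plan is to prove the contrapositive: assuming $\pi_\lambda(Y|\tilde{\mathcal{I}})$ admits two versions $Q_\lambda,Q_\lambda^*$ with $\mathbb{Q}(Q_\lambda\ne Q_\lambda^*)>0$, I will exhibit two versions of $\pi_\lambda(X_1|\mathcal{I}^{\mathbb{X}})$ on $\Omega$ that are not $\mathbb{P}$-a.s.\ equal. The construction is to pull both versions back to $\Omega$ via composition with $\mathbb{X}$: set
\[
\tilde Q_\lambda(\omega):=Q_\lambda((X_1(\omega),X_2(\omega),\ldots)),\qquad \tilde Q_\lambda^*(\omega):=Q_\lambda^*((X_1(\omega),X_2(\omega),\ldots)),
\]
and verify that both satisfy the defining conditions of $\pi_\lambda(X_1|\mathcal{I}^{\mathbb{X}})$ while remaining non-equivalent under $\mathbb{P}$.

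The $\mathcal{I}^{\mathbb{X}}$-measurability of $\tilde Q_\lambda$ is immediate from Lemma~\ref{fakt}, because $Q_\lambda$ is $\tilde{\mathcal{I}}$-measurable. For the two defining quantile inequalities I would introduce the Borel sets $D_+:=\{x\in\mathbb{R}^{\mathbb{N}}: x_1\ge Q_\lambda(x)\}$ and $D_-:=\{x\in\mathbb{R}^{\mathbb{N}}: x_1\le Q_\lambda(x)\}$, noting that $I(D_+)((X_1,X_2,\ldots))=I(X_1\ge\tilde Q_\lambda)$ and similarly for $D_-$. By the definition of $\pi_\lambda(Y|\tilde{\mathcal{I}})$ we have $\mathbb{E}_{\mathbb{Q}}(I(D_+)|\tilde{\mathcal{I}})\ge 1-\lambda$ and $\mathbb{E}_{\mathbb{Q}}(I(D_-)|\tilde{\mathcal{I}})\ge\lambda$, both $\mathbb{Q}$-a.s. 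Applying Lemma~\ref{fakt2} to each then yields
\[
\mathbb{P}(X_1\ge\tilde Q_\lambda|\mathcal{I}^{\mathbb{X}})\ge 1-\lambda \text{\ \ and\ \ } \mathbb{P}(X_1\le\tilde Q_\lambda|\mathcal{I}^{\mathbb{X}})\ge\lambda \quad \mathbb{P}\text{-a.s.,}
\]
where I also use that \eqref{def_q} forces any $\mathbb{Q}$-null subset of $\mathbb{R}^{\mathbb{N}}$ to pull back to a $\mathbb{P}$-null event on $\Omega$. Hence $\tilde Q_\lambda$ is a version of $\pi_\lambda(X_1|\mathcal{I}^{\mathbb{X}})$, and the identical reasoning applies to $\tilde Q_\lambda^*$. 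Finally, \eqref{def_q} gives $\mathbb{P}(\tilde Q_\lambda\ne\tilde Q_\lambda^*)=\mathbb{Q}(Q_\lambda\ne Q_\lambda^*)>0$, so the two versions are not $\mathbb{P}$-a.s.\ equal, contradicting the hypothesised uniqueness of $\pi_\lambda(X_1|\mathcal{I}^{\mathbb{X}})$.

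The main obstacle in this plan is the step translating the $\mathbb{Q}$-a.s.\ conditional-expectation inequalities on $(\mathbb{R}^{\mathbb{N}},\tilde{\mathcal{I}})$ into $\mathbb{P}$-a.s.\ inequalities on $(\Omega,\mathcal{I}^{\mathbb{X}})$. Lemma~\ref{fakt2} is tailor-made for exactly this transfer, so once one has rewritten the events $\{X_1\ge\tilde Q_\lambda\}$ and $\{X_1\le\tilde Q_\lambda\}$ in the form $I(D)((X_1,X_2,\ldots))$ with Borel $D$ --- which is straightforward from the Borel measurability of $(x_1,x_2,\ldots)\mapsto x_1-Q_\lambda((x_1,x_2,\ldots))$ --- the reduction is routine.
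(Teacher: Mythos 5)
Your proposal is correct and follows essentially the same route as the paper: both pull a version of $\pi_{\lambda}(Y|\tilde{\mathcal{I}})$ back to $\Omega$ by composition with $\mathbb{X}$, establish $\mathcal{I}^{\mathbb{X}}$-measurability via Lemma~\ref{fakt}, transfer the two quantile inequalities via Lemma~\ref{fakt2} together with the pushforward relation~\eqref{def_q}, and conclude by contraposition that non-uniqueness of $\pi_{\lambda}(Y|\tilde{\mathcal{I}})$ would yield two non-equivalent versions of $\pi_{\lambda}(X_1|\mathcal{I}^{\mathbb{X}})$. Your explicit introduction of the Borel sets $D_{+}$ and $D_{-}$ is just a notational repackaging of the paper's use of $I(Y\ge R_{\lambda})$.
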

\begin{proof}
First we will show that if~$R_{\lambda}$ is a~version of $\pi_{\lambda}(Y|\tilde{\mathcal{I})}$ then $R_{\lambda}((X_1,X_2,\ldots))$ is a~version of $\pi_{\lambda}(X_1|\mathcal{I}^{\mathbb{X}})$. Since $R_{\lambda}$ is a version of $\pi_{\lambda}(Y|\tilde{\mathcal{I})}$, the following conditions hold
\begin{enumerate}
    \item 
    $R_{\lambda}$ is $\tilde{\mathcal{I}}$-measurable;
    \item
    $\mathbb{Q}(Y\ge R_{\lambda}|\tilde{\mathcal{I}})\ge 1-\lambda$ and $\mathbb{Q}(Y\le R_{\lambda}|\tilde{\mathcal{I}})\ge\lambda\quad\mathbb{Q-}$a.s.
\end{enumerate}
Observe that $R_{\lambda}((X_1,X_2,\ldots))$ is $\mathcal{I}^{\mathbb{X}}$-measurable in consequence of Condition~1 and Lemma~\ref{fakt}. Next we will show that
\[
\mathbb{P}(X_1\ge R_{\lambda}((X_1,X_2,\ldots))|\mathcal{I}^{\mathbb{X}})\ge 1-\lambda\quad\mathbb{P}-\textrm{a.s.}
\]
Indeed,
\begin{align*}
    \mathbb{P}(&X_1\ge R_{\lambda}((X_1,X_2,\ldots))|\mathcal{I}^{\mathbb{X}})
    =\mathbb{P}(Y((X_1,X_2,\ldots))\ge R_{\lambda}((X_1,X_2,\ldots))|\mathcal{I}^{\mathbb{X}})\\
    &=\mathbb{E}_{\mathbb{P}}\Big(I(Y\ge R_{\lambda})((X_1,X_2,\ldots))|\mathcal{I}^{\mathbb{X}}\Big)\\
    &=\mathbb{E}_{\mathbb{Q}}(I(Y\ge R_{\lambda})|\tilde{\mathcal{I}})((X_1,X_2,\ldots))
     \quad\mathbb{P}-a.s.,
\end{align*}
where the first equality and the third one are consequences of the definition of~$Y$ and~Lemma~\ref{fakt2}, respectively. But
$\mathbb{E}_{\mathbb{Q}}(I(Y\ge R_{\lambda})|\tilde{\mathcal{I}})((X_1,X_2,\ldots))\ge 1-\lambda$ $\mathbb{P}$-a.s. by~\eqref{def_q} and Condition~2, because
\[
\mathbb{P}(\mathbb{E}_{\mathbb{Q}}(I(Y\ge R_{\lambda})|\tilde{\mathcal{I}})((X_1,X_2,\ldots))\ge 1-\lambda)
= \mathbb{Q}(\mathbb{E}_{\mathbb{Q}}(I(Y\ge R_{\lambda})|\tilde{\mathcal{I}}) \ge 1-\lambda)=1.
\]
Thus we have proved that
\begin{align*}
    \mathbb{P}(X_1\ge R_{\lambda}((X_1,X_2,\ldots))|\mathcal{I}^{\mathbb{X}})\ge 1-\lambda\quad\mathbb{P-}\textrm{a.s.}
\end{align*}
In the same manner we can show that
\begin{align*}
    \mathbb{P}(X_1\le R_{\lambda}((X_1,X_2,\ldots))|\mathcal{I}^{\mathbb{X}})\ge \lambda\quad\mathbb{P-}\textrm{a.s.}
\end{align*}
Therefore $R_{\lambda}((X_1,X_2,\ldots))$ is a~version of $\pi_{\lambda}(X_1|\mathcal{I}^{\mathbb{X}})$.

Now we move to the next part of the proof.
Let $\pi_{\lambda}(Y|\tilde{\mathcal{I}})$ be not unique. It means that we have two versions of 
$\pi_{\lambda}(Y|\tilde{\mathcal{I}})$, $Q_{\lambda}$ and $Q_{\lambda}^{*}$ say, such that
\begin{equation}\label{n1}
\mathbb{Q}(\{(x_1,x_2,\ldots)\in\mathbb{R}^{\mathbb{N}}\colon Q_{\lambda}((x_1,x_2,\ldots))\neq Q_{\lambda}^{*}((x_1,x_2,\ldots))\})>0.
\end{equation}
By the previous observation, we conclude that $Q_{\lambda}((X_1,X_2,\ldots))$ and $Q_{\lambda}^{*}((X_2,X_2,\ldots))$ are two versions of 
$\pi_{\lambda}(X_1|\mathcal{I}^{\mathbb{X}})$.
Moreover,
\[
\mathbb{P}(Q_{\lambda}((X_1,X_2,\ldots))\neq Q_{\lambda}^{*}((X_1,X_2,\ldots)))
=\mathbb{Q}(Q_{\lambda}\neq Q_{\lambda}^{*})>0
\]
by~\eqref{def_q} and~\eqref{n1}.
This proves that $\pi_{\lambda}(X_1|\mathcal{I}^{\mathbb{X}})$ is not unique.
\end{proof}

\section{New version of strong ergodic theorem for central order statistics}
Our purpose is to investigate the almost sure convergence of central order statistics. Definitely, the most general result dealing with the problem was derived by Dembi\'nska (2014). 
Her crucial result provides conditions that are sufficient for the existence of the almost sure limit of central order statistics arising from a strictly stationary process $\mathbb{X}=(X_n,n\ge 1)$, and describes the distribution of the limiting rv. The main idea of her proof is to first show the corresponding result for a~sequence $\mathbb{Y}=(Y_n,n\ge 1)$ of rv's from the probability space $(\mathbb{R}^{\mathbb{N}},\mathcal{B}(\mathbb{R}^{\mathbb{N}}),\mathbb{Q})$ and then to generalize this result to an~arbitrary strictly stationary processes existing in any probability triple $(\Omega,\mathcal{F},\mathbb{P})$.
We begin by recalling the above-mentioned facts (Theorems~\ref{tw.1}-\ref{tw.6}).

\begin{tw}\label{tw.1}
Let $Y$ be an rv on a probability space $(\mathbb{R}^{\mathbb{N}},\mathcal{B}(\mathbb{R}^{\mathbb{N}}),\mathbb{Q})$, where the probability
measure $\mathbb{Q}$ is stationary. Suppose that the sequence of rv's $(Y_n, n \ge 1)$ is defined by
\begin{equation}\label{def_Y}
Y_i ((x_1, x_2,\ldots)) = Y ((x_i , x_{i+1},\ldots))\textrm{ for }(x_1, x_2,\ldots) \in \mathbb{R}^{\mathbb{N}}\textrm{ and }i \ge 1.
\end{equation}
If $(k_n, n \ge 1)$ is a~sequence of  integers satisfying~\eqref{warK}
and if the conditional $\lambda$th quantile $\pi_{\lambda}(Y|\mathcal{I}_{\mathbb{Q}})$ of~$Y$ given~$\mathcal{I}_{\mathbb{Q}}$ is unique, then
\begin{equation}\label{teza1}
Y_{k_n:n}\xrightarrow{\mathbb{Q}-a.s.}\pi_{\lambda}(Y|\mathcal{I}_{\mathbb{Q}}).
\end{equation}
\end{tw}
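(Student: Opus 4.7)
The plan is to reduce the statement to a Glivenko--Cantelli-type convergence of the empirical distribution along a countable dense set, then couple that with the uniqueness hypothesis to squeeze $Y_{k_n:n}$ between two sample quantities that both converge into the correct region.

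First, I would fix a countable dense subset $\{t_m : m \ge 1\}$ of $\mathbb{R}$ and, for each $m$, apply Birkhoff's ergodic theorem for the stationary system $(\mathbb{R}^{\mathbb{N}},\mathcal{B}(\mathbb{R}^{\mathbb{N}}),\mathbb{Q},T)$ to the bounded function $h_m(x) = I(Y(x) \le t_m)$. This yields, $\mathbb{Q}$-a.s.,
\[
\frac{1}{n}\sum_{i=1}^n I(Y_i \le t_m) \;=\; \frac{1}{n}\sum_{i=1}^n h_m(T^{i-1}\cdot) \;\longrightarrow\; \mathbb{E}_{\mathbb{Q}}\bigl(I(Y\le t_m)\,\big|\,\mathcal{I}_{\mathbb{Q}}\bigr) \;=\; \mathbb{Q}(Y\le t_m\mid\mathcal{I}_{\mathbb{Q}}),
\]
where I use Lemma~\ref{dodatkowy1} (or just the fact that Birkhoff naturally produces the conditional expectation with respect to $\mathcal{I}_{\mathbb{Q}}$). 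Intersecting over $m$ gives a single $\mathbb{Q}$-full set on which the empirical distribution function, evaluated at every $t_m$, converges to the conditional distribution $F_{\mathcal{I}_{\mathbb{Q}}}(t)(\omega) := \mathbb{Q}(Y\le t\mid \mathcal{I}_{\mathbb{Q}})(\omega)$.

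Next I would exploit the uniqueness hypothesis. Denote a version of $\pi_{\lambda}(Y|\mathcal{I}_{\mathbb{Q}})$ by $q$. Uniqueness is equivalent to the following distributional property, valid outside a $\mathbb{Q}$-null set: for every $s<q(\omega)$ one has $F_{\mathcal{I}_{\mathbb{Q}}}(s)(\omega)<\lambda$, and for every $t>q(\omega)$ one has $F_{\mathcal{I}_{\mathbb{Q}}}(t)(\omega)>\lambda$; indeed, an equality in either direction would produce a second, distinct quantile. Fix $\varepsilon>0$ and $\omega$ in the good set; choose $t_m$ and $t_{m'}$ in the dense set with $q(\omega)-\varepsilon<t_m<q(\omega)<t_{m'}<q(\omega)+\varepsilon$. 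Then
\[
\frac{1}{n}\sum_{i=1}^n I(Y_i\le t_m)\;\longrightarrow\;F_{\mathcal{I}_{\mathbb{Q}}}(t_m)(\omega)<\lambda \;=\;\lim\frac{k_n}{n}\;<\;F_{\mathcal{I}_{\mathbb{Q}}}(t_{m'})(\omega)\;\longleftarrow\;\frac{1}{n}\sum_{i=1}^n I(Y_i\le t_{m'}).
\]
For all sufficiently large $n$, therefore, $\sum I(Y_i\le t_m)<k_n\le \sum I(Y_i\le t_{m'})$, which by the definition of order statistics translates into $t_m<Y_{k_n:n}\le t_{m'}$, hence $|Y_{k_n:n}-q(\omega)|<\varepsilon$ eventually. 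Applying this over a sequence $\varepsilon_j\downarrow 0$ and intersecting the corresponding full-measure sets gives $Y_{k_n:n}\xrightarrow{\mathbb{Q}\text{-a.s.}} q$, which is \eqref{teza1}.

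The step I expect to be most delicate is the dichotomy derived from uniqueness, namely converting the abstract statement ``any two versions of $\pi_{\lambda}(Y|\mathcal{I}_{\mathbb{Q}})$ agree $\mathbb{Q}$-a.s.'' into the pointwise strict inequalities $F_{\mathcal{I}_{\mathbb{Q}}}(s)(\omega)<\lambda<F_{\mathcal{I}_{\mathbb{Q}}}(t)(\omega)$ for $s<q(\omega)<t$. This requires picking a \emph{regular} version of the conditional distribution function (monotone, right-continuous in $t$ for $\mathbb{Q}$-a.e.~$\omega$), arguing that if equality held at some such $s$ or $t$ then that point would itself satisfy both defining inequalities of a conditional quantile, and thereby would furnish a second version violating uniqueness. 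The remaining ingredients---Birkhoff, the bound $k_n/n\to\lambda$, and the counting characterisation of order statistics---are essentially bookkeeping once this dichotomy is in hand.
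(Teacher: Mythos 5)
Your proof is correct; note that the paper does not actually prove Theorem~\ref{tw.1} but recalls it from Dembi\'nska (2014), and your argument --- Birkhoff's ergodic theorem applied to the indicators $I(Y_i\le t_m)$ over a countable dense set (with the limit identified as $\mathbb{Q}(Y\le t_m\mid\mathcal{I}_{\mathbb{Q}})$ via Lemma~\ref{dodatkowy1}), combined with the sandwich $t_m<Y_{k_n:n}\le t_{m'}$ --- is essentially the standard route underlying that reference. The delicate step you flag is handled exactly as you describe: taking a regular version $F_{\mathcal{I}_{\mathbb{Q}}}(\cdot)(\omega)$ of the conditional distribution, the extremal quantiles $q^-=\inf\{t:F_{\mathcal{I}_{\mathbb{Q}}}(t)\ge\lambda\}$ and $q^+=\sup\{t:F_{\mathcal{I}_{\mathbb{Q}}}(t^-)\le\lambda\}$ are both $\mathcal{I}_{\mathbb{Q}}$-measurable versions of $\pi_{\lambda}(Y|\mathcal{I}_{\mathbb{Q}})$, so uniqueness forces $q^-=q^+$ $\mathbb{Q}$-a.s., which is precisely the pointwise dichotomy $F_{\mathcal{I}_{\mathbb{Q}}}(s)(\omega)<\lambda<F_{\mathcal{I}_{\mathbb{Q}}}(t)(\omega)$ for $s<q(\omega)<t$ that your squeeze requires.
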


\begin{tw}\label{tw.6}
Let the sequence $\mathbb{X}=(X_n,n\ge 1)$, measure $\mathbb{Q}$ and rv~$Y$ be as in Theorem~\ref{fakt3}.
If $(k_n,n\ge 1)$ is a~sequence satisfying~\eqref{warK} and
the conditional $\lambda$th quantile $\pi_{\lambda}(Y|\mathcal{I}_{\mathbb{Q}})$ of~$Y$ given~$\mathcal{I}_{\mathbb{Q}}$ is unique then there exists an~rv~$W$ such that
\begin{equation}\label{gwiazdka}
X_{k_n:n}\xrightarrow{\mathbb{P}-a.s.}W\textrm{ as }n\to\infty.
\end{equation}
Moreover the rv $W$ has the same distribution as 
$\pi_{\lambda}(Y|\mathcal{I}_{\mathbb{Q}})$.
\end{tw}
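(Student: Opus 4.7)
The plan is to reduce Theorem~\ref{tw.6} to Theorem~\ref{tw.1} (Dembi\'nska's result in the canonical sequence space) using the pushforward identity \eqref{def_q} as the bridge between $\Omega$ and $\mathbb{R}^{\mathbb{N}}$. The candidate limiting rv will simply be Dembi\'nska's conditional quantile evaluated along the path $\mathbb{X}(\omega)$.

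First, I would apply Theorem~\ref{tw.1} to the rv $Y$ defined by \eqref{def_Y1}. Since $Y((x_1,x_2,\ldots))=x_1$, the sequence $(Y_n)$ built from $Y$ via \eqref{def_Y} consists of the coordinate projections $Y_i((x_1,x_2,\ldots))=x_i$, so $Y_{k_n:n}((x_1,x_2,\ldots))$ is exactly the $k_n$-th smallest among $x_1,\ldots,x_n$. The hypothesized uniqueness of $\pi_{\lambda}(Y|\mathcal{I}_{\mathbb{Q}})$ lets Theorem~\ref{tw.1} supply
\[
Y_{k_n:n}\xrightarrow{\mathbb{Q}-a.s.}\pi_{\lambda}(Y|\mathcal{I}_{\mathbb{Q}}).
\]

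Next, I would set
\[
W(\omega):=\pi_{\lambda}(Y|\mathcal{I}_{\mathbb{Q}})((X_1(\omega),X_2(\omega),\ldots))
\]
and transfer the $\mathbb{Q}$-a.s.\ convergence to a $\mathbb{P}$-a.s.\ statement. Let $B\subseteq\mathbb{R}^{\mathbb{N}}$ denote the Borel set of sequences on which $Y_{k_n:n}\to\pi_{\lambda}(Y|\mathcal{I}_{\mathbb{Q}})$; by the previous display $\mathbb{Q}(B)=1$, hence by \eqref{def_q} also $\mathbb{P}(\mathbb{X}\in B)=\mathbb{Q}(B)=1$. For any $\omega$ with $\mathbb{X}(\omega)\in B$, the identity
\[
X_{k_n:n}(\omega)=Y_{k_n:n}((X_1(\omega),X_2(\omega),\ldots))
\]
(which holds deterministically since the $Y_i$ are the coordinate maps) together with the convergence on $B$ gives $X_{k_n:n}(\omega)\to W(\omega)$, yielding \eqref{gwiazdka}. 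For the distributional claim, a second application of \eqref{def_q} shows that for every Borel $A\subseteq\mathbb{R}$,
\[
\mathbb{P}(W\in A)=\mathbb{P}(\mathbb{X}\in\{y\in\mathbb{R}^{\mathbb{N}}\colon\pi_{\lambda}(Y|\mathcal{I}_{\mathbb{Q}})(y)\in A\})=\mathbb{Q}(\pi_{\lambda}(Y|\mathcal{I}_{\mathbb{Q}})\in A),
\]
so $W$ and $\pi_{\lambda}(Y|\mathcal{I}_{\mathbb{Q}})$ are equidistributed.

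I do not foresee a genuine obstacle: all the analytic content was already absorbed into Theorem~\ref{tw.1}, and the remaining work is a purely measure-theoretic transfer along the distribution-preserving map $\mathbb{X}\colon(\Omega,\mathbb{P})\to(\mathbb{R}^{\mathbb{N}},\mathbb{Q})$. The only mildly subtle point is checking that the $\mathbb{Q}$-null exceptional set for the convergence pulls back to a $\mathbb{P}$-null set, but this is immediate from \eqref{def_q} applied to the (Borel) complement of $B$.
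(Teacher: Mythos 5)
Your proof is correct and follows exactly the route the paper itself sketches for this result (which it recalls from Dembi\'nska, 2014, without reproving): establish the convergence on the canonical sequence space via Theorem~\ref{tw.1} and then transfer it to $(\Omega,\mathcal{F},\mathbb{P})$ along the distribution-preserving map $\mathbb{X}$ using~\eqref{def_q}, taking $W=\pi_{\lambda}(Y|\mathcal{I}_{\mathbb{Q}})\circ\mathbb{X}$. The one point worth making explicit --- that the convergence set $B$ and the level sets of $\pi_{\lambda}(Y|\mathcal{I}_{\mathbb{Q}})$ are Borel, so that~\eqref{def_q} applies --- follows since $\mathcal{I}_{\mathbb{Q}}\subseteq\mathcal{B}(\mathbb{R}^{\mathbb{N}})$, and you have flagged it.
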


Even though Theorem~\ref{tw.6} concerns central order statistics from the sequence of rv's $(X_n,n\ge 1)$, existing in a~probability space 
$(\Omega,\mathcal{F},\mathbb{P})$, in its formulation the rv~$Y$ from the probability triple $(\mathbb{R}^{\mathbb{N}},\mathcal{B}(\mathbb{R}^{\mathbb{N}}),\mathbb{Q})$ is used. The main result of this paper is the following elegant version of Theorem~\ref{tw.6}, in which $Y$ does not appear -- the assumptions and conclusion are given only in terms of the sequence $(X_n,n\ge 1)$.
\begin{tw}\label{tw.2}
Let $\mathbb{X}=(X_n,n\ge 1)$ be a~strictly stationary sequence and $(k_n,n\ge 1)$ be a~sequence of integers satisfying~\eqref{warK}. If the conditional $\lambda$th quantile $\pi_{\lambda}(X_1|\mathcal{I}^{\mathbb{X}})$ of~$X_1$ given~$\mathcal{I}^{\mathbb{X}}$ is unique, then
\begin{equation}
X_{k_n:n}\xrightarrow{\mathbb{P}-a.s.}\pi_{\lambda}(X_1|\mathcal{I}^{\mathbb{X}})\textrm{ as }n\to\infty.
\end{equation}
\end{tw}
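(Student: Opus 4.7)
The plan is to reduce the statement to the version already known on $(\mathbb{R}^{\mathbb{N}},\mathcal{B}(\mathbb{R}^{\mathbb{N}}),\mathbb{Q})$ (Theorem~\ref{tw.1}) by pushing forward from $\Omega$ to $\mathbb{R}^{\mathbb{N}}$ via the map $\omega\mapsto\mathbb{X}(\omega)=(X_1(\omega),X_2(\omega),\ldots)$, and then to identify the limit with $\pi_{\lambda}(X_1|\mathcal{I}^{\mathbb{X}})$ using the results of Section~3.

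First, I would set up the uniqueness chain. With $Y$ as in~\eqref{def_Y1}, the hypothesis that $\pi_{\lambda}(X_1|\mathcal{I}^{\mathbb{X}})$ is unique together with Theorem~\ref{fakt3} yields that $\pi_{\lambda}(Y|\tilde{\mathcal{I}})$ is unique, and then Theorem~\ref{dodatkowy2}(i) gives that $\pi_{\lambda}(Y|\mathcal{I}_{\mathbb{Q}})$ is also unique. Furthermore, by Theorem~\ref{dodatkowy2}(ii), any version of $\pi_{\lambda}(Y|\tilde{\mathcal{I}})$ equals $\pi_{\lambda}(Y|\mathcal{I}_{\mathbb{Q}})$ $\mathbb{Q}$-a.s. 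Thus the hypotheses of Theorem~\ref{tw.1} are satisfied, so
\[
Y_{k_n:n}\xrightarrow{\mathbb{Q}-a.s.}\pi_{\lambda}(Y|\mathcal{I}_{\mathbb{Q}}),
\]
where $(Y_n,n\ge 1)$ is defined by~\eqref{def_Y}; note that by the definition of $Y$ we have $Y_i((x_1,x_2,\ldots))=x_i$, hence $Y_{k_n:n}((X_1,X_2,\ldots))=X_{k_n:n}$ pointwise on $\Omega$.

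Next I would transfer this convergence to $(\Omega,\mathcal{F},\mathbb{P})$. Let $A\subseteq\mathbb{R}^{\mathbb{N}}$ be the set on which $Y_{k_n:n}\to\pi_{\lambda}(Y|\mathcal{I}_{\mathbb{Q}})$; then $\mathbb{Q}(A)=1$, and by~\eqref{def_q},
\[
\mathbb{P}\bigl(\{\omega\colon (X_1(\omega),X_2(\omega),\ldots)\in A\}\bigr)=\mathbb{Q}(A)=1.
\]
For every $\omega$ in this full-measure set we obtain
\[
X_{k_n:n}(\omega)=Y_{k_n:n}(\mathbb{X}(\omega))\;\longrightarrow\;\pi_{\lambda}(Y|\mathcal{I}_{\mathbb{Q}})(\mathbb{X}(\omega)),
\]
so $X_{k_n:n}\xrightarrow{\mathbb{P}-a.s.}\pi_{\lambda}(Y|\mathcal{I}_{\mathbb{Q}})(\mathbb{X})$.

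It remains to identify $\pi_{\lambda}(Y|\mathcal{I}_{\mathbb{Q}})(\mathbb{X})$ with $\pi_{\lambda}(X_1|\mathcal{I}^{\mathbb{X}})$ $\mathbb{P}$-a.s. Fix a version $R_\lambda$ of $\pi_{\lambda}(Y|\tilde{\mathcal{I}})$; by Theorem~\ref{dodatkowy2}(ii), $R_\lambda=\pi_{\lambda}(Y|\mathcal{I}_{\mathbb{Q}})$ $\mathbb{Q}$-a.s., so after pushing forward, $R_\lambda(\mathbb{X})=\pi_{\lambda}(Y|\mathcal{I}_{\mathbb{Q}})(\mathbb{X})$ $\mathbb{P}$-a.s. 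On the other hand, the first part of the proof of Theorem~\ref{fakt3} shows that $R_\lambda(\mathbb{X})$ is a version of $\pi_{\lambda}(X_1|\mathcal{I}^{\mathbb{X}})$, and by our uniqueness hypothesis, $R_\lambda(\mathbb{X})=\pi_{\lambda}(X_1|\mathcal{I}^{\mathbb{X}})$ $\mathbb{P}$-a.s. Combining, the almost sure limit equals $\pi_{\lambda}(X_1|\mathcal{I}^{\mathbb{X}})$, which completes the proof.

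The only delicate point is the identification of the limit in the last paragraph: Theorem~\ref{tw.6} by itself merely asserts convergence to \emph{some} rv $W$ with a prescribed distribution. The key to pinning down $W=\pi_{\lambda}(X_1|\mathcal{I}^{\mathbb{X}})$ is to avoid quoting Theorem~\ref{tw.6} as a black box, instead invoking Theorem~\ref{tw.1} directly and then stitching together the uniqueness statements from Theorems~\ref{dodatkowy2} and~\ref{fakt3} (together with the measurability calculation from Lemma~\ref{fakt}) so that the composition $R_\lambda(\mathbb{X})$ is forced to coincide with $\pi_{\lambda}(X_1|\mathcal{I}^{\mathbb{X}})$.
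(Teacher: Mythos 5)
Your proposal is correct and follows essentially the same route as the paper: the uniqueness hypothesis is propagated to $\pi_{\lambda}(Y|\mathcal{I}_{\mathbb{Q}})$ via Theorem~\ref{fakt3} and Theorem~\ref{dodatkowy2}, Theorem~\ref{tw.1} is applied on $(\mathbb{R}^{\mathbb{N}},\mathcal{B}(\mathbb{R}^{\mathbb{N}}),\mathbb{Q})$, and the convergence is pulled back to $(\Omega,\mathcal{F},\mathbb{P})$ through the pointwise identity $X_{k_n:n}=Y_{k_n:n}((X_1,X_2,\ldots))$ together with the transfer machinery of Lemmas~\ref{fakt} and~\ref{fakt2}. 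The only (harmless) organizational difference is that where the paper's Lemma~\ref{lem4.2} re-verifies the three defining properties of the conditional quantile directly for $\lim_{n\to\infty}X_{k_n:n}$, you identify the limit by citing the first half of the proof of Theorem~\ref{fakt3}, which already establishes that $R_{\lambda}((X_1,X_2,\ldots))$ is a version of $\pi_{\lambda}(X_1|\mathcal{I}^{\mathbb{X}})$.
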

We first prove a reduced form of Theorem~\ref{tw.2}, namely Lemma~\ref{lem4.2} and then using Theorems~\ref{dodatkowy1}-\ref{fakt3} we will prove the main result.

\begin{lem}\label{lem4.2}
Under the assumptions of Theorem~\ref{tw.6},
\begin{equation}\label{teza2}
X_{k_n:n}\xrightarrow{\mathbb{P}-a.s.}\pi_{\lambda}(X_1|\mathcal{I}^{\mathbb{X}})\textrm{ as }n\to\infty.
\end{equation}
\end{lem}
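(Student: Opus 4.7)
The plan is to lift the $\mathbb{Q}$-a.s. convergence of $Y_{k_n:n}$ established by Theorem~\ref{tw.1} on the canonical space $(\mathbb{R}^{\mathbb{N}},\mathcal{B}(\mathbb{R}^{\mathbb{N}}),\mathbb{Q})$ up to $\mathbb{P}$-a.s. convergence of $X_{k_n:n}$ on $(\Omega,\mathcal{F},\mathbb{P})$ through the measurable map $\omega\mapsto\mathbb{X}(\omega)=(X_1(\omega),X_2(\omega),\ldots)$, and then to identify the resulting limit as a version of $\pi_{\lambda}(X_1|\mathcal{I}^{\mathbb{X}})$ using the construction performed in the first half of the proof of Theorem~\ref{fakt3}.

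Concretely, for the choice of $Y$ in~\eqref{def_Y1} the auxiliary variables from~\eqref{def_Y} reduce to the coordinate projections $Y_i((x_1,x_2,\ldots))=x_i$, so
\[
Y_{k_n:n}\bigl((X_1(\omega),X_2(\omega),\ldots)\bigr)=X_{k_n:n}(\omega)\quad\textrm{for every }\omega\in\Omega.
\]
By Theorem~\ref{tw.1}, the uniqueness of $\pi_{\lambda}(Y|\mathcal{I}_{\mathbb{Q}})$ yields $Y_{k_n:n}\xrightarrow{\mathbb{Q}-a.s.}\pi_{\lambda}(Y|\mathcal{I}_{\mathbb{Q}})$, while Theorem~\ref{dodatkowy2} shows that $\pi_{\lambda}(Y|\tilde{\mathcal{I}})$ is also unique and $\mathbb{Q}$-a.s. equal to $\pi_{\lambda}(Y|\mathcal{I}_{\mathbb{Q}})$. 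Fixing a version $R_{\lambda}$ of $\pi_{\lambda}(Y|\tilde{\mathcal{I}})$, we therefore obtain a set $S\in\mathcal{B}(\mathbb{R}^{\mathbb{N}})$ with $\mathbb{Q}(S)=1$ on which $Y_{k_n:n}(y)\to R_{\lambda}(y)$ pointwise.

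Since $\mathbb{P}(\mathbb{X}\in S)=\mathbb{Q}(S)=1$ by~\eqref{def_q}, for $\mathbb{P}$-almost every $\omega$ we get
\[
X_{k_n:n}(\omega)=Y_{k_n:n}(\mathbb{X}(\omega))\longrightarrow R_{\lambda}(\mathbb{X}(\omega))=R_{\lambda}((X_1(\omega),X_2(\omega),\ldots)).
\]
The first part of the proof of Theorem~\ref{fakt3} shows that $R_{\lambda}((X_1,X_2,\ldots))$ is a version of $\pi_{\lambda}(X_1|\mathcal{I}^{\mathbb{X}})$, which closes the argument.

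The only genuinely delicate point is the transfer of the almost sure statement across the two probability spaces via the pushforward, which rests on the observation that the preimage under~$\mathbb{X}$ of a $\mathbb{Q}$-null set is $\mathbb{P}$-null; everything else amounts to applying the identity $Y_i\circ\mathbb{X}=X_i$ and reusing the lifting already performed inside the proof of Theorem~\ref{fakt3}.
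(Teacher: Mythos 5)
Your proof is correct, and it rests on the same machinery as the paper's (Theorem~\ref{tw.1} on the canonical space, the $\tilde{\mathcal{I}}$ versus $\mathcal{I}_{\mathbb{Q}}$ translation from Theorem~\ref{dodatkowy2}, and the lifting results Lemma~\ref{fakt}--Lemma~\ref{fakt2}), but it is organized differently and more economically. The paper takes the limit $W=\lim_{n\to\infty}X_{k_n:n}$ whose existence is guaranteed by Theorem~\ref{tw.6} and then verifies the three defining properties of $\pi_{\lambda}(X_1|\mathcal{I}^{\mathbb{X}})$ for $W$ from scratch: it checks $\mathcal{I}^{\mathbb{X}}$-measurability by exhibiting the invariant set $B=\{(x_1,x_2,\ldots)\colon\lim_n x_{k_n:n}\in A\}$, and it establishes the two conditional-probability inequalities by rerunning the Lemma~\ref{fakt2} computation with $D=\{Y\ge\lim_n Y_{k_n:n}\}$ and $D=\{Y\le\lim_n Y_{k_n:n}\}$. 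You instead identify the limit explicitly: the pushforward identity $\mathbb{P}(\mathbb{X}\in S)=\mathbb{Q}(S)$ transfers the $\mathbb{Q}$-a.s. convergence $Y_{k_n:n}\to R_{\lambda}$ (where $R_{\lambda}$ is a version of $\pi_{\lambda}(Y|\tilde{\mathcal{I}})$, equal $\mathbb{Q}$-a.s. to $\pi_{\lambda}(Y|\mathcal{I}_{\mathbb{Q}})$ by Lemma~\ref{pomocniczy1} and uniqueness) into $X_{k_n:n}\to R_{\lambda}(\mathbb{X})$ $\mathbb{P}$-a.s., and then you simply cite the first half of the proof of Theorem~\ref{fakt3}, which already establishes that $R_{\lambda}((X_1,X_2,\ldots))$ is a version of $\pi_{\lambda}(X_1|\mathcal{I}^{\mathbb{X}})$. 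This buys you two things: you avoid duplicating the Lemma~\ref{fakt2} argument for the particular indicator functions involving $\lim_n Y_{k_n:n}$, and you do not need to invoke Theorem~\ref{tw.6} at all (nor the ad hoc convention of setting the limit to $0$ where it fails to exist), since existence and identification of the limit come in a single step. The one presentational caveat is that you lean on a statement proved \emph{inside} the proof of Theorem~\ref{fakt3} rather than on its formal conclusion; in a final write-up that intermediate claim should be isolated as its own lemma so it can be referenced cleanly.
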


\begin{proof}
The conclusion of Theorem~\ref{tw.6} ensures that $\lim_{n\to\infty} X_{k_n:n}$ exists $\mathbb{P}$-a.s. To define the value of this limit  on the set (of $\mathbb{P}$-probability zero) of $\omega\in\Omega$ such that $\lim_{n\to\infty}X_{k_n:n}(\omega)$ does not exist, let us assume, for instance, 
\[
\lim_{n\to\infty}X_{k_n:n}(\omega)=0.
\]
Showing~\eqref{teza2} amounts to proving the following three conditions:
\begin{enumerate}
\item
$\lim_{n\to\infty}X_{k_n:n}$ is $\mathcal{I}^{\mathbb{X}}$-measurable,
\item
$\mathbb{P}(X_1\ge \lim_{n\to\infty}X_{k_n:n}|\mathcal{I}^{\mathbb{X}})\ge 1-\lambda$ $\mathbb{P}$-a.s.,
\item
$\mathbb{P}(X_1\le \lim_{n\to\infty}X_{k_n:n}|\mathcal{I}^{\mathbb{X}})\ge \lambda$ $\mathbb{P}$-a.s.
\end{enumerate}

Condition~1 means that for all $A\in\mathcal{B}(\mathbb{R})$,
$$\{\omega\in\Omega:\ \lim_{n\to\infty}X_{k_n:n}(\omega)\in A\}\in \mathcal{I}^{\mathbb{X}},$$
which, by the definition of $\mathcal{I}^{\mathbb{X}}$, corresponds to the following statement:
\begin{align*}
\textrm{for all } A\in\mathcal{B}(\mathbb{R}) \textrm{ there exists } B\in \mathcal{B}(\mathbb{R}^{\mathbb{N}})  \textrm{ such that for all  } n\ge 1 \\
\{\omega\in\Omega:\ \lim_{n\to\infty}X_{k_n:n}(\omega)\in A\}=\{\omega\in\Omega:\ (X_n(\omega),X_{n+1}(\omega),\ldots)\in B\}.
\end{align*}
Let us consider  $B=\{(x_1,x_2,\ldots)\in\mathbb{R}^{\mathbb{N}}:\ \lim_{n\to\infty} x_{k_n:n}\in A\}$, where $\lim_{n\to\infty}x_{k_n:n}$ is defined to equal $0$ if this limit does not exist. Note that by Theorem~\ref{dodatkowy1}, Theorem~\ref{tw.1} still holds if \eqref{teza1} is replaced by
\begin{equation}\label{inna_teza}
Y_{k_n:n}\xrightarrow{\mathbb{Q}-a.s.}\pi_{\lambda}(Y|\tilde{\mathcal{I}})    
\end{equation}
Consequently $B\in\tilde{\mathcal{I}}$, so it is clear that $B\in \mathcal{B}(\mathbb{R}^{\mathbb{N}})$ and for all $n\ge 1$,
\[
\{\omega\in\Omega:\ (X_1(\omega),X_{2}(\omega),\ldots)\in B\}=\{\omega\in\Omega:\ (X_n(\omega),X_{n+1}(\omega),\ldots)\in B\}.
\]
This completes part~1 of the proof.

In order to prove condition~2, first note that \eqref{inna_teza} ensures that $\lim_{n\to\infty}Y_{k_n:n}$ is $\tilde{\mathcal{I}}$-measurable and
\begin{equation}\label{pomoc}
\mathbb{Q}(Y\ge\lim_{n\to\infty}Y_{k_n:n}|\tilde{\mathcal{I}})\ge 1-\lambda
\textrm{ and }
\mathbb{Q}(Y\le \lim_{n\to\infty}Y_{k_n:n}|\tilde{\mathcal{I}})\ge \lambda\quad \mathbb{Q}-\textrm{a.s.}
\end{equation}
Furthermore,
\begin{align*}
\mathbb{P}(X_1&\ge\lim_{n\to\infty}X_{k_n:n}|\mathcal{I}^{\mathbb{X}})= \mathbb{P}\Big(Y((X_1,X_2,\ldots))\ge\lim_{n\to\infty}Y_{k_n:n}((X_1,X_2,\ldots))|\mathcal{I}^{\mathbb{X}}\Big)\\
&=\mathbb{E}_{\mathbb{P}}\Big(I(Y\ge\lim_{n\to\infty}Y_{k_n:n})((X_1,X_2,\ldots))|\mathcal{I}^{\mathbb{X}}\Big)\\
&=\mathbb{E}_{\mathbb{Q}}\Big(I(Y\ge\lim_{n\to\infty}Y_{k_n:n})|\tilde{\mathcal{I}}\Big)((X_1,X_2,\ldots))\quad\mathbb{P}-\textrm{a.s.},
\end{align*}
where the first and the third equalities follow from the definition of~$Y$ and Lemma~\ref{fakt2}, respectively.
Now it suffices to observe that
\[
\mathbb{E}_{\mathbb{Q}}\Big(I(Y\ge\lim_{n\to\infty}Y_{k_n:n})|\tilde{\mathcal{I}}\Big)((X_1,X_2,\ldots))\ge 1-\lambda\quad\mathbb{P}-\textrm{a.s.},
\]
because by~\eqref{def_q} and~\eqref{pomoc}, we have
\begin{align*}
\mathbb{P}\Big(\mathbb{E}_{\mathbb{Q}}\Big(I(Y&\ge\lim_{n\to\infty}Y_{k_n:n})|\tilde{\mathcal{I}}\Big)((X_1,X_2,\ldots))\ge 1-\lambda\Big)\\
&=\mathbb{Q}\Big(\mathbb{E}_{\mathbb{Q}}(I(Y\ge\lim_{n\to\infty}Y_{k_n:n})|\tilde{\mathcal{I}})\ge 1-\lambda\Big) =1.
\end{align*}
Condition~3 can be proved in an~analogous way as Condition~2.
\end{proof}

\begin{proof}[Proof of Theorem~\ref{tw.2}]
Let the measure $\mathbb{Q}$ and the rv~$Y$ be as in Theorem~\ref{tw.6}. Then Theorems~\ref{dodatkowy1} and~\ref{fakt3} show that the condition that the conditional quantile $\pi_{\lambda}(X|\mathcal{I}^{\mathbb{X}})$ is unique implies that so is $\pi_{\lambda}(Y|\mathcal{I}_{\mathbb{Q}})$. Hence assumptions of Lemma~\ref{lem4.2} are satisfied and using this result we obtain the desired conclusion.
\end{proof}

\end{document}